\newenvironment{proof}{ {\it Proof.} }{\hfill{\it{QED}}\medskip}
\selectfont\symbol{60}\fontencoding{\encodingdefault}}
\selectfont\symbol{62}\fontencoding{\encodingdefault}}
\selectfont\symbol{124}\fontencoding{\encodingdefault}}
\newcommand{\assign}{:=}
\newcommand{\mathd}{\mathrm{d}}
\newcommand{\nocomma}{}
\newcommand{\noplus}{}
\newcommand{\nosymbol}{}
\newcommand{\tmem}[1]{{\em #1\/}}
\newcommand{\tmmathbf}[1]{\ensuremath{\boldsymbol{#1}}}
\newcommand{\tmname}[1]{\textsc{#1}}
\newcommand{\tmop}[1]{\ensuremath{\operatorname{#1}}}
\newcommand{\tmstrong}[1]{\textbf{#1}}
\newcommand{\tmtextit}[1]{{\itshape{#1}}}
\newenvironment{enumeratenumeric}{\begin{enumerate}[1.] }{\end{enumerate}}
\newtheorem{lemma}{Lemma}
\newtheorem{theorem}{Theorem}
\begin{document}

\begin{frontmatter}
  \title{Sharp $L^{p}$ estimates for discrete second order Riesz transforms}
  
  \author[IMT]{Komla Domelevo}
  \author[IMT]{Stefanie Petermichl\thanksref{SP}\thanksref{fn1}}
  \ead{stefanie.petermichl@gmail.com}
  \ead[url]{http://math.univ-toulouse.fr/{$\tilde{\hspace{0.5em}}$}petermic}

  \thanks[SP]{Research supported in part by ANR-12-BS01-0013-02. The author is a member of IUF}
  \thanks[fn1]{Correponding author, Tel:+33 5 61 55 76 59, Fax: +33 5 61 55 83 85}
  \address[IMT]{Institut de Math{\'e}matiques de Toulouse, Universit{\'e} Paul Sabatier, Toulouse, France}
  
  \begin{abstract}
    We show that multipliers of second order Riesz transforms on products of
    discrete abelian groups enjoy the $L^{p}  $ estimate $p^{\ast} -1$, where
    $p^{\ast} = \max \{ p,q \}$ and $p$ and $q$ are conjugate exponents. This
    estimate is sharp if one considers all multipliers of the form $\sum_{i}
    \sigma_{i} R_{i} R^{\ast}_{i} \nocomma$ with $| \sigma_{i} | \leqslant 1$
    and infinite groups. In the real valued case, we obtain better sharp 
    estimates for some specific multipliers, such as $\sum_{i} \sigma_{i}
    R_{i} R^{\ast}_{i} \nocomma$ with $0 \leqslant \sigma_{i} \leqslant 1$.
    These are the first known precise $L^{p} $ estimates for discrete
    Calder{\'o}n-Zygmund operators.
  \end{abstract}

  \begin{keyword}
    Discrete groups \sep Second order Riesz transforms \sep Sharp $L^{p}$ estimates
  \end{keyword}

\end{frontmatter}

\section{Introduction}

Sharp $L^{p}$ estimates for norms of classical Calder{\'o}n-Zygmund operators
have been intriguing for a very long time. Only in rare cases, such optimal
constants are known and they all rely in one way or another on the existence
of a specific function of several variables that governs the proof. It is a
result by Pichorides {\cite{Pic1972}}, that determined the exact $L^{p}$ norm
of the Hilbert transform in the disk. There is also a very streamlined proof
by Ess{\'e}n {\cite{Ess1984}} on the same subject. That this estimate carries
over to $\mathbbm{R}^{N}$ and the Riesz transforms is an observation by
Iwaniec and Martin {\cite{IwaMar1996a}}. A probabilistic counterpart in the
language of orthogonal martingales was deduced by Ba{\~n}uelos and Wang in
{\cite{BanWan1995}}, using the idea of Ess{\'e}n in a probabilistic setting. 

The only other sharp $L^{p}$ estimates for classical Calder{\'o}n-Zygmund
operators we are aware of are those closely related to the real part of the
Beurling-Ahlfors operator, $R^{2}_{1} -R^{2}_{2}$, where $R_{i}$ are the two
Riesz transforms in $\mathbbm{R}^{2}$. The first estimate of this type is due
to Nazarov and Volberg {\cite{VolNaz2004a}} and was used for a partial result
towards the famous $p^{\ast} -1$ conjecture, the $L^{p}$ norm of the
Beurling-Ahlfors transform. It was not clear until the work of Geiss,
Montgomery-Smith and Saksman {\cite{GeiMonSak2010a}}, that the Nazarov-Volberg
estimate for the second order Riesz transform was infact sharp. In contrast to
the estimate for the Hilbert transform, in this case an underlying
probabilistic estimate - Burkholder's famous theorem on differentially
subordinate martingales - was known first and could be used to obtain the
optimal estimate for the real part of the Beurling-Ahlfors operator and its
appropriate generalisations to $\mathbbm{R}^{N}$.

It has been observed, that these $L^{p}$ estimates can be obtained without any
loss for the case of compact Lie groups, see Arcozzi {\cite{Arc1998a}} and
Ba{\~n}uelos and Baudoin {\cite{BanBau2013}}.

The situation changes, when the continuity assumption is lost. Optimal 
norm estimates for the discrete Hilbert transform on the integers are a long standing 
open question. Infact, until now, there were no known sharp $L^{p}$ estimates for any 
Calder{\'o}n-Zygmund operator when entering discrete abelian groups such as
$\mathbbm{Z}$ or cyclic groups $\mathbbm{Z}/m\mathbbm{Z}$. In the present paper, we provide such optimal estimates for discrete second order Riesz transforms. 

The definition of
derivatives and thus the Laplacian respects the discrete setting. For each
direction $i$, there are two choices of Riesz transforms, that depend on
whether right or left handed derivatives are used:
\begin{eqnarray*}
  R^{i}_{+} \circ \sqrt{- \Delta} = \partial^{i}_{+} \hspace{1em} \tmop{and}
  \hspace{1em} R^{i}_{-} \circ \sqrt{- \Delta} = \partial_{-}^{i} .
\end{eqnarray*}
One often sees $R_{i}  $written for $R^{i}_{+}$ and $R^{\ast}_{i}$ for
$-R^{i}_{-}$. The discrete derivatives that come into play induce difficulties
that are due to their non-local nature. To underline the artifacts one may
encounter, we briefly discuss a related subject, dimensional behavior of Riesz
vectors in a variety of different situations.

F. Lust-Piquard showed in {\cite{Lus1998a}} and {\cite{Lus2004a}} that square
functions of Riesz transforms on products of discrete abelian groups of a
single generator $\mathbbm{Z},\mathbbm{Z}/k\mathbbm{Z}$ for k=2,3,4 enjoy
dimension-free $L^{p}  $estimates when $p \geqslant 2$ but suffer from a
dimension dependence when $1<p<2$. She used non-commutative methods to obtain
this nice positive result for $p \geqslant 2$. She provided an example
originating on the hyper cube $( \mathbbm{Z}/2\mathbbm{Z} )^{N}  $,
demonstrating dimensional growth when $p<2$.

This difficulty of dimensional dependence for some $p$ in the discrete case
is in a sharp contrast to numerous, both interesting and difficult,
dimension-free estimates in some very general continuous settings. We refer
the reader to the result by Stein {\cite{Ste1983a}} in the classical case.
Probabilistic methods, applied by P.A. Meyer {\cite{Mey1984a}} lead to the
first proof of this theorem in the Gaussian setting, whereas Pisier found an
analytic proof in {\cite{Pis1988a}}. We refer the reader to more general
settings on compact Lie groups {\cite{Arc1998a}}, Heisenberg groups
{\cite{CouMulZie1996a}} and the best to date estimate for Riesz vectors on
Riemannian manifolds with a condition on curvature {\cite{CarDra2013a}}.

\section{Definitions and main results}

Let \ $G$ be an additive discrete abelian group generated by $e$ and $f:G
\rightarrow \mathbbm{C}$. Its right and left hand derivatives are
\begin{eqnarray*}
  \partial_{+} f ( n ) =f ( n+e ) -f ( n ) \hspace{1em} \tmop{and}
  \hspace{1em} \partial_{-} f ( n ) =f ( n ) -f ( n-e ) .
\end{eqnarray*}
The reader may keep in mind that $G \in \{
\mathbbm{Z},\mathbbm{Z}/m\mathbbm{Z};m \geqslant 2 \}$. We then define partial
derivatives $\partial^{i}_{\pm}$ accordingly on products of such groups. Here,
we denote by $e_{i}$ the $i$th unit element in $G^{N}$ and
\begin{eqnarray*}
  \partial^{i}_{+} f ( n ) =f ( n+e_{i} ) -f ( n )   \hspace{1em} \tmop{and}
  \hspace{1em} \partial^{i}_{-} f ( n ) =f ( n ) -f ( n-e_{i} ) .
\end{eqnarray*}
A priori, we consider $G^{N}$, but it will be clear that our methods apply to
mixed cases $\otimes^{N}_{i=1} G_{i}$ where $G_{i}$ are not necessarily the
same discrete groups. The discrete Laplacian becomes
\begin{eqnarray*}
  \Delta = \sum_{i=1}^{N} \partial^{i}_{+} \partial^{i}_{-} = \sum_{i=1}^{N}
  \partial^{i}_{-} \partial^{i}_{+} .
\end{eqnarray*}
For each direction $i$ there are two choices of Riesz transforms, defined in a
standard manner:
\begin{eqnarray*}
  R^{i}_{+} \circ \sqrt{- \Delta} = \partial^{i}_{+} \hspace{1em} \tmop{and}
  \hspace{1em} R^{i}_{-} \circ \sqrt{- \Delta} = \partial_{-}^{i} .
\end{eqnarray*}
In this text, we are concerned with second order Riesz transforms defined as
\begin{eqnarray*}
  R^{2}_{i} \assign R^{i}_{+} R^{i}_{-} =-R_{i} R^{\ast}_{i}
\end{eqnarray*}
and combinations thereof, such as
\begin{eqnarray*}
  R_{\alpha}^{2} \assign \sum^{N}_{i=1} \alpha_{i} R^{2}_{i} ,
\end{eqnarray*}
where the $\alpha = \{ \alpha_{i} \} \in \mathbbm{C}^{N} : | \alpha_{i} |
\leqslant 1$. When $p$ and $q$ are conjugate exponents, let
\begin{eqnarray*}
  p^{\ast} = \max \{ p,q \} = \left\{ \begin{array}{ll}
    p & 2 \leqslant p\\
    q= \tfrac{p}{p-1}   & 1<p \leqslant 2
  \end{array} \right. .
\end{eqnarray*}
So \ $p^{\ast} -1= \max \left\{ p-1, \tfrac{1}{p-1} \right\} .$ The following
are our main results.

\begin{theorem}
  \label{T: p minus 1 estimate}$R^{2}_{\alpha} :L^{p} ( G^{N} ,\mathbbm{C} )
  \rightarrow L^{p} ( G^{N} ,\mathbbm{C} )$ enjoys the operator norm estimate
   $ \| R_{\alpha}^{2} \| \leqslant p^{\ast} -1.$
  The estimate above is sharp when $G=\mathbbm{Z}$ and $N \geqslant
  2$.{\tmname{}}
\end{theorem}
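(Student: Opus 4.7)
The plan is to obtain the upper bound $\|R_\alpha^2\|\leqslant p^\ast-1$ by combining a heat-semigroup representation of the multiplier with Burkholder's sharp $L^p$ inequality for differentially subordinate martingales, and to obtain sharpness by transferring the already known continuous extremizers for $R_1^2-R_2^2$ from $\mathbbm{R}^2$ onto $\mathbbm{Z}^2$ by rescaling.

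For the upper bound, I would first factor $R_i^2=\partial_+^i\partial_-^i(-\Delta)^{-1}$ and use $(-\Delta)^{-1}=\int_0^\infty e^{t\Delta}\,dt$ on the orthogonal complement of the constants. After summation by parts (using $(\partial_-^i)^\ast=-\partial_+^i$) this yields the bilinear representation
\[
\langle R_\alpha^2 f,\,g\rangle \;=\; -\int_0^\infty \sum_i \alpha_i\,\bigl\langle \partial_-^i P_{t/2}f,\; \partial_-^i P_{t/2}g\bigr\rangle\,dt,
\]
with $P_t=e^{t\Delta}$ the discrete heat semigroup. I would then realize this integral probabilistically by running the continuous-time random walk $X_t$ on $G^N$ with generator $\Delta$: the process $M_t^f=(P_{T-t}f)(X_t)$ is a martingale whose predictable jump upon a transition in direction $\pm e_i$ equals $\partial_\pm^i P_{T-t}f(X_t)$. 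The martingale transform $\widetilde M^f$ that multiplies the $i$-th jump by $\alpha_i$ is then differentially subordinate to $M^f$ (since $|\alpha_i|\leqslant 1$), and a duality calculation identifies $\mathbbm{E}[\widetilde M_T^f\,\overline{g(X_T)}]$ with $\langle R_\alpha^2 f,g\rangle$ in the limit $T\to\infty$. Burkholder's sharp inequality for complex differentially subordinate martingales, with constant $p^\ast-1$, then delivers the operator norm bound. Equivalently, one can argue directly at the Bellman-function level, using Burkholder's explicit function $U$ and verifying its concavity along the discrete heat flow.

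For the sharpness assertion, take $G=\mathbbm{Z}$, $N=2$, and the specific choice $\alpha=(1,-1)$. The continuous analogue $R_1^2-R_2^2$ on $\mathbbm{R}^2$ is known to attain operator norm $p^\ast-1$ by Geiss, Montgomery-Smith and Saksman \cite{GeiMonSak2010a}. Given a near-extremizer $\varphi\in L^p(\mathbbm{R}^2)$, define the rescalings $\varphi^\varepsilon(n)=\varphi(\varepsilon n)$ on $\mathbbm{Z}^2$. The Fourier symbol of the discrete $R^2_{(1,-1)}$ on the torus $[-\pi,\pi]^2$, namely $(\cos\xi_1-\cos\xi_2)/(2-\cos\xi_1-\cos\xi_2)$, concentrates near the origin under this rescaling and is there well approximated by the continuous symbol $(\xi_1^2-\xi_2^2)/|\xi|^2$, so a Riemann-sum comparison yields $\|R^2_{(1,-1)}\varphi^\varepsilon\|_p/\|\varphi^\varepsilon\|_p\to p^\ast-1$ as $\varepsilon\to 0$. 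Tensoring with constants in any additional coordinates produces the same lower bound for all $N\geqslant 2$.

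The main obstacle I anticipate lies in the martingale step of the upper bound. The Markov chain associated with the discrete Laplacian is a pure-jump process, so unlike the Brownian setting one cannot invoke It\^o's formula for a diffusion to tie the gradient bilinear form cleanly to a martingale quadratic variation. Instead one must verify by hand that the coordinate-by-coordinate transform by $(\alpha_i)$ produces a martingale that is genuinely differentially subordinate in the sense of Burkholder's theorem, and that its pairing with $g(X_T)$ reproduces the bilinear form $\langle R_\alpha^2 f,g\rangle$ to leading order as $T\to\infty$. A naive discretization here typically leaks the factor $p^\ast-1$ and yields only a larger, non-sharp constant, so considerable care is needed at this translation step to preserve the sharp constant.
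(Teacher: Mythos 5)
Your high-level strategy --- heat-extension representation plus Burkholder's constant $p^\ast-1$, and transfer from $\mathbbm{R}^2$ for sharpness --- coincides with the paper's, but the mechanism for delivering Burkholder's constant is different, and it contains the gap you yourself flag. You primarily route through a continuous-time random walk and a pure-jump martingale transform; the paper deliberately avoids probability at that stage. It extracts from Burkholder's theorem only the \emph{existence} of a Bellman function $B(\tmmathbf{F},\tmmathbf{G},\tmmathbf{f},\tmmathbf{g})$ with $0\leqslant B\leqslant(p^\ast-1)\tmmathbf{F}^{1/p}\tmmathbf{G}^{1/q}$ and, after mollification, the uniform Hessian bound $-\mathd^2_{\tmmathbf{v}}B\geqslant 2\lvert\mathd\tmmathbf{f}\rvert\,\lvert\mathd\tmmathbf{g}\rvert$. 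The obstacle you identify --- no chain rule for the discrete Laplacian, hence no It\^o formula tying the gradient bilinear form to a quadratic variation --- is exactly the crux, and the paper's resolution is analytic rather than probabilistic: expand $B(\tilde{\tmmathbf{v}}(n\pm e_i,t))$ around $B(\tilde{\tmmathbf{v}}(n,t))$ by Taylor's theorem with integral remainder and use that the Hessian bound is \emph{universal}, independent of the base point $\tmmathbf{v}$, so it controls the remainder along the whole segment joining neighboring lattice values. This yields the pointwise dissipation estimate $(\partial_t-\Delta)(B\circ\tilde{\tmmathbf{v}})\geqslant\sum_i(\lvert\partial_+^i\tilde f\rvert\,\lvert\partial_+^i\tilde g\rvert+\lvert\partial_-^i\tilde f\rvert\,\lvert\partial_-^i\tilde g\rvert)$, hence the bilinear embedding, hence the theorem via the representation formula. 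Your passing ``verify concavity along the discrete heat flow'' points at this step, but the decisive universality observation is missing from your account; your primary probabilistic route is also workable in principle (the Burkholder constant for differentially subordinate jump martingales is still $p^\ast-1$, so nothing is ``leaked'' there), but you have not closed the identification of $\mathbbm{E}[\widetilde M_T^f\,\overline{g(X_T)}]$ with the bilinear form, which is exactly what the paper's dissipation lemma replaces. On sharpness the two arguments agree in substance: the paper invokes Lax equivalence for the isomorphic groups $(t\mathbbm{Z})^N\to\mathbbm{R}^N$, while you carry out the symbol-level comparison by hand; both transfer the Geiss--Montgomery-Smith--Saksman lower bound from $\mathbbm{R}^2$.
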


In some cases, better estimates are available in the real valued case.

\begin{theorem}
  \label{T: Choi constant estimate}$R^{2}_{\alpha} :L^{p} ( G^{N} ,\mathbbm{R}
  ) \rightarrow L^{p} ( G^{N} ,\mathbbm{R} )$ with $\alpha = \{ \alpha_{i}
  \}_{i=1, \ldots ,N} \in \mathbbm{R}^{N}$ enjoys the sharp norm estimate $\|
  R_{\alpha}^{2} \| \leqslant \mathfrak{C}_{\min   \alpha , \nocomma \max  
  \alpha ,p} \nocomma$, where these are the Choi constants. 
\end{theorem}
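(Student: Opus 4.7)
The plan is to represent $R^2_\alpha$ as a projection of a martingale transform on a suitable probability space attached to $G^N$, and then invoke Choi's sharp inequality for real-valued martingale transforms whose predictable multiplier takes values in a prescribed interval $[a,b] \subset \mathbbm{R}$. This follows the same overall strategy used for Theorem~\ref{T: p minus 1 estimate}, but with a more delicate observation about the \emph{range} (not merely the modulus) of the transforming sequence.

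First I would set up the probabilistic framework. Since $G^N$ is a product of discrete abelian groups, the discrete Laplacian $\Delta = \sum_i \partial^i_+ \partial^i_-$ generates a continuous-time random walk $X_t$ on $G^N$. For a real-valued function $f$, the heat extension $F(t,x) = e^{t\Delta} f(x)$ yields a martingale $M_t = F(T-t, X_t)$ on a suitable time interval. Each second order Riesz transform $R^2_i$ arises by pairing the differences $\partial^i_\pm F$ along jumps of $X_t$ in the $i$th coordinate, so that $R^2_\alpha f$ is realized as the $L^2$ projection of a martingale transform $N_t = \int_0^t v_s \, dM_s$ of $M_t$ onto $L^2(G^N)$.

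The key observation is that the predictable multiplier $v_s$ takes only values lying in the convex hull of $\{\alpha_i\}_{i=1}^N$, because at each infinitesimal jump exactly one coordinate $i(s)$ is selected and the contribution of $R^2_\alpha$ along that jump carries the scalar $\alpha_{i(s)}$. Consequently $v_s \in [\min_i \alpha_i, \max_i \alpha_i]$ pointwise, not merely $|v_s| \leqslant \max_i |\alpha_i|$. When $f$ is real valued and $\alpha \in \mathbbm{R}^N$, both $M_t$ and $N_t$ are real martingales, and $N$ is a transform of $M$ by a real predictable sequence confined to $[\min \alpha, \max \alpha]$. Choi's sharp martingale inequality then yields $\|N_\infty\|_p \leqslant \mathfrak{C}_{\min \alpha, \max \alpha, p} \|M_\infty\|_p$, which after projection bounds $\|R^2_\alpha\|_{L^p \to L^p}$ by the Choi constant.

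The main obstacle I anticipate is two-fold. On the upper bound side, verifying that $v_s$ genuinely stays in the \emph{interval} $[\min \alpha, \max \alpha]$ rather than just in the disk of radius $\max_i |\alpha_i|$ requires honest bookkeeping of the signs of the $\alpha_i$ in the coordinate-wise martingale representation; this is precisely what distinguishes the real case from the complex case in Theorem~\ref{T: p minus 1 estimate}, where only $|\sigma_i|\leqslant 1$ survives. On the sharpness side, one must construct test functions on $G^N$ that asymptotically realize Choi's extremal martingales; following the strategy of Geiss--Montgomery-Smith--Saksman employed for Theorem~\ref{T: p minus 1 estimate}, these should arise by approximating the extremal processes for Choi's inequality by discrete harmonic functions on $\mathbbm{Z}^N$ via a semigroup/central-limit passage, so that the Choi bound is saturated in the limit.
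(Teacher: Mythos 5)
Your proposal takes a genuinely different route from the paper. The paper does not realize $R^2_\alpha$ as a conditional expectation of a martingale transform and then invoke Choi's inequality directly. Instead, it extracts from Choi's discrete-time martingale theorem the \emph{existence} of a Bellman function $C^+_p(\tmmathbf{F},\tmmathbf{G},\tmmathbf{f},\tmmathbf{g})$ with a midpoint-concavity (dissipation) property, smooths it, derives the inequality
\begin{eqnarray*}
  (\partial_t - \Delta)\, C^+_p(\tilde{\tmmathbf{v}}(n,t)) \geqslant \sum_i \left[\partial^i_+\tilde f\,\partial^i_+\tilde g\right]_+ + \left[\partial^i_-\tilde f\,\partial^i_-\tilde g\right]_+,
\end{eqnarray*}
integrates it against the heat kernel to prove a bilinear embedding (Theorem~\ref{T: bilinear embedding Choi}), and finally combines this with the analytic representation formula of Lemma~\ref{L: representation formula} (proved by Fourier transform). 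This machinery uses no stochastic calculus at all; probability enters only to guarantee that a function with the required convexity and size exists. Your approach, by contrast, is the Gundy--Varopoulos/Ba{\~n}uelos--Wang style projection argument transplanted to a continuous-time random walk on $G^N$.

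There are two substantive gaps in your proposal. First, the central claim --- that $R^2_\alpha f$ \emph{is} the $L^2$-projection of the martingale transform $N_t = \int_0^t v_s\, dM_s$ with $M_t = \tilde f(T-t,X_t)$ --- is asserted, not proven, and it is precisely the delicate point in the discrete setting. The analogous continuous statement relies on the background-radiation construction and It\^o calculus for Brownian motion; for a pure-jump process on $G^N$ one must set up the time-reversal and the limiting regime $T\to\infty$ (on a non-compact group such as $\mathbbm{Z}^N$) and verify that the projection reproduces the exact bilinear form $(f,R^2_\alpha g) = -2\sum_i\alpha_i\int_0^\infty\sum_{G^N}\partial^i_+\tilde f\,\overline{\partial^i_+\tilde g}\,\mathrm{d}t$ including its normalization. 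The paper deliberately avoids this by proving the representation formula analytically and never forming the martingale. Second, Choi's theorem as stated is for \emph{discrete-time} real martingale difference sequences; your $(M_t, N_t)$ are continuous-time pure-jump martingales, so you would need a c\`adl\`ag version of Choi's inequality in which the predictable process takes values in $[\min\alpha,\max\alpha]$. That extension is plausible but not free, and would itself most naturally be proven via the very Bellman function the paper constructs. Your remark on sharpness is also off the paper's path: the paper inherits sharpness from $\mathbbm{R}^N$ via the scaled lattices $(t\mathbbm{Z})^N$ and the Lax equivalence theorem, rather than by constructing near-extremal test functions directly on $G^N$.
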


The Choi constants (see {\cite{Cho1992a}}) are not explicit, except
$\mathfrak{C}_{-1,1,p} =p^{\ast} -1$. However, about $\mathfrak{C}_{0,1,p}$ it
is known that
\begin{eqnarray*}
  \mathfrak{C}_{0,1,p} = \tfrac{p}{2} + \tfrac{1}{2} \log \left(
  \tfrac{1+e^{-2}}{2} \right) + \tfrac{\beta_{2}}{p} + \ldots .
\end{eqnarray*}
with $\beta_{2} = \log^{2} \left( \tfrac{1+e^{-2}}{2} \right) + \tfrac{1}{2}
\log \left( \tfrac{1+e^{-2}}{2} \right) -2 \left( \tfrac{e^{-2}}{1+e^{-2}}
\right)^{2} .$

The proof of our results uses Bellman functions and passes via embedding
theorems that are a \ somewhat stronger estimate than Theorem \ref{T: p minus
1 estimate} and Theorem \ref{T: Choi constant estimate}.

Abelian groups of a single generator are isomorphic to $\mathbbm{Z}$ or
$\mathbbm{Z}/m\mathbbm{Z}$ for some integer $m \geqslant 2.$ We are going to
make use of the Fourier transform on these groups. In the case of the infinite
group $\mathbbm{Z}$, the charater set is $\mathbbm{T}= \left[ - \frac{1}{2} ,
\frac{1}{2} \right] .$ We briefly review the Fourier Transform in
$\mathbbm{Z}^{N}$. We write $\mathbbm{T}^{N}  $for $\left[ - \frac{1}{2} ,
\frac{1}{2} \right]^{N}$. $\mathbbm{T}^{N}$ is the character group of
$\mathbbm{Z}^{N}$.
\begin{eqnarray*}
  \widehat{} : ( f:\mathbbm{Z}^{N} \rightarrow \mathbbm{C} ) \rightarrow (
  \hat{f} :\mathbbm{T}^{N} \rightarrow \mathbbm{C} ) ,f ( n ) \mapsto \hat{f}
  ( \xi ) = \sum_{\mathbbm{Z}^{N}} f ( n ) e^{-2 \pi i n \cdot \xi}
\end{eqnarray*}
with inversion formula
\begin{eqnarray*}
  f ( n ) = \int_{\mathbbm{T}^{N}} \hat{f} ( \xi ) e^{2 \pi i n \cdot \xi} d
  \xi .
\end{eqnarray*}
We observe that discrete derivatives are multiplier operators
\begin{eqnarray*}
  \widehat{\partial^{j}_{\pm}  f} ( \xi ) = \pm \hat{f} ( \xi ) \cdot ( e^{\pm
  2 \pi i \xi_{j}} -1 ) = \hat{f} ( \xi ) \cdot 2i e^{\pm \pi i \xi_{j}} \sin
  ( \pi \xi_{j} ) .
\end{eqnarray*}
For the discrete Laplacian there holds
\begin{eqnarray*}
  ( \Delta f ) ( n ) & = & \sum_{i} ( \partial^{i}_{-} \partial^{i}_{+} f ) (
  n ) = \sum_{i} \partial^{i}_{-} ( f ( n+e_{i} ) -f ( n ) )\\
  & = & \sum_{i} f ( n+e_{i} ) -2f ( n ) +f ( n-e_{i} ) .
\end{eqnarray*}
We obtain the multiplier of $\Delta$:
\begin{eqnarray*}
  \hat{\Delta} = \sum_{i} ( -2+e^{2 \pi i \xi_{i}} +e^{-2 \pi i \xi_{i}} ) =-4
  \sum_{i} \sin^{2} ( \pi \xi_{i} ) ,
\end{eqnarray*}
and the multiplier of the left and right handed $j$th Riesz transforms:
\begin{eqnarray*}
  \widehat{R^{j}_{\pm}} = \frac{2 i e^{\pm \pi i \xi_{j}} \sin ( \pi
  \xi_{j} )}{\sqrt{4 \sum_{i} \sin^{2} ( \pi \xi_{i} )}} .
\end{eqnarray*}
Second order Riesz transforms $R^{2}_{j} \assign R^{j}_{+} R_{-}^{j}  $ have
the following multipliers:
\begin{eqnarray*}
  \widehat{R^{2}_{j}} = \frac{-4 \sin^{2} ( \pi \xi_{j} )}{4 \sum_{i} \sin^{2}
  ( \pi \xi_{i} )} .
\end{eqnarray*}
In $( \mathbbm{Z}/m\mathbbm{Z} )^{N}$, not many changes are required. Recall
that a character $\chi$ of a locally compact abelian group $G$ is a
homomorphism $\chi :G \rightarrow S^{1}$. Let $e$ be a generator of our group
$G=\mathbbm{Z}/m\mathbbm{Z}$, so $m e=0$ and $n e \neq 0$ for $0<n<m$.
Therefore $\chi ( e )^{m} =1$ so that the character group of
$\mathbbm{Z}/m\mathbbm{Z}$ consists of the $m$ roots of unity. The character
set of $( \mathbbm{Z}/m\mathbbm{Z} )^{N}$ is therefore isomorphic to itself,
like for all finite abelian groups. The Fourier transform looks as follows:
\begin{eqnarray*}
  \hat{f} ( \xi ) = \sum_{( \mathbbm{Z}/m\mathbbm{Z} )^{N}} f ( n ) e^{-
  \frac{2 \pi i}{m} n \cdot \xi}
\end{eqnarray*}
as well as
\begin{eqnarray*}
  f ( n ) = \frac{1}{m^{N}} \sum_{( \mathbbm{Z}/m\mathbbm{Z} )^{N}} \hat{f} (
  \xi ) e^{\frac{2 \pi i}{m} n \cdot \xi} .
\end{eqnarray*}
These are special cases of the more general formulae $\hat{f} ( \chi ) =
\sum_{G} f ( g ) \bar{\chi} ( g )$ and $f ( x ) = \frac{1}{\sharp G} \sum_{G}
\hat{f} ( \chi ) \chi ( g )$. We observe the following about discrete
derivatives:
\begin{eqnarray*}
  \widehat{\partial^{i}_{\pm}} =2 i e^{\pm \frac{\pi i}{m} \xi_{i}} \sin
  \left( \frac{\pi}{m} \xi_{i} \right) \hspace{1em} \tmop{and} \hspace{1em}
  \hat{\Delta} = \widehat{\sum_{i} \partial^{i}_{-} \partial^{i}_{+}} =-4
  \sum_{i} \sin^{2} \left( \frac{\pi}{m} \xi_{i} \right) .
\end{eqnarray*}

The Riesz transforms are defined as above and we have for all directions $1
\leqslant j \leqslant N$,
\begin{eqnarray*}
  \widehat{R^{2}_{j}} = \frac{-4 \sin^{2} \left( \frac{\pi}{m} \xi_{j}
  \right)}{4 \sum_{i} \sin^{2} \left( \frac{\pi}{m} \xi_{i} \right)}.
\end{eqnarray*}
We will be using continuous in time heat extensions $\tilde{f} :G^{N} \times [
0, \infty ) \rightarrow \mathbbm{C}$ of functions $f:G^{N} \rightarrow
\mathbbm{C}$. Namely for all $t \geqslant 0$, $\tilde{f} ( t ) =e^{t \Delta}
f$. Using this notation, we are ready to state two further result, the
following bilinear estimates:

\begin{theorem}
  \label{T: bilinear embedding p minus 1}Let $f$ and $g$ be test functions on
  $G^{N}$ and $p$ and $q$ conjugate exponents. Then we have the estimate
  \begin{eqnarray}
    2 \sum_{i=1}^{N} \int^{\infty}_{0} \sum_{G^{N}} | \partial^{i}_{+}
    \tilde{f} ( n,t ) |   | \partial_{+}^{i} \tilde{g} ( n,t ) | \mathd  t
    \leqslant ( p^{\ast} -1 ) \| f \|_{p} \| g \|_{q} . \label{eq: bilinear
    embedding p minus 1}
  \end{eqnarray}
\end{theorem}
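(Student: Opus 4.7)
The argument is the heat-flow Bellman function scheme transported to the discrete spatial setting. I would start from a Nazarov--Treil--Volberg type Bellman function $Q(x,y,X,Y)$ defined on the convex domain
\[ D = \{ (x,y,X,Y) \in \mathbbm{C}\times\mathbbm{C}\times\mathbbm{R}_{+}\times\mathbbm{R}_{+} : |x|^{p} \leqslant X,\ |y|^{q} \leqslant Y \}, \]
enjoying the pointwise infinitesimal concavity
\[ -\langle D^{2}Q(\xi)\cdot h, h\rangle \geqslant 2|h_{1}||h_{2}|, \qquad \xi\in D,\ h\in \mathbbm{C}^{2}\times\mathbbm{R}^{2}, \]
together with a size bound of the form $0 \leqslant Q(x,y,X,Y) \leqslant C_{p}(X+Y)$ that, after the standard scaling $f \to \lambda f$, $g \to \lambda^{-1}g$, yields $b(0)\leqslant (p^{\ast}-1)\|f\|_{p}\|g\|_{q}$ for the quantity $b$ introduced below. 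Such a $Q$ is available from the bilinear embedding literature.

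Set $u(n,t) = e^{t\Delta}|f|^{p}(n)$, $v(n,t) = e^{t\Delta}|g|^{q}(n)$, and $P(n,t) = (\tilde{f}(n,t), \tilde{g}(n,t), u(n,t), v(n,t))$. Because the discrete heat semigroup on $G^{N}$ is Markov, Jensen's inequality gives $|\tilde{f}(n,t)|^{p} \leqslant u(n,t)$ and $|\tilde{g}(n,t)|^{q} \leqslant v(n,t)$, so $P(n,t)\in D$ for every $n,t$; convexity of $D$ then places the entire segment $[P(n,t), P(n+e_{i},t)]$ inside $D$ for every $i$, $n$, and $t$.

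Define $b(t) = \sum_{n \in G^{N}} Q(P(n,t))$. The heart of the proof is the dissipation estimate
\[ -\frac{d}{dt} b(t) \geqslant 2 \sum_{i=1}^{N} \sum_{n \in G^{N}} |\partial_{+}^{i}\tilde{f}(n,t)|\,|\partial_{+}^{i}\tilde{g}(n,t)|. \]
To obtain it, differentiate $b$ termwise, use $\partial_{t} P = \Delta P$ componentwise, and apply the discrete summation-by-parts identity $(\partial_{+}^{i})^{\ast}=-\partial_{-}^{i}$ together with $\Delta = \sum_{i}\partial_{-}^{i}\partial_{+}^{i}$ to obtain
\[ -\frac{d}{dt} b(t) = \sum_{i=1}^{N}\sum_{n} \langle \partial_{+}^{i}[\nabla Q \circ P](n,t),\ \partial_{+}^{i}P(n,t)\rangle. \]
Since $Q$ is smooth and $[P(n,t),P(n+e_{i},t)] \subset D$, the fundamental theorem of calculus gives
\[ \partial_{+}^{i}[\nabla Q \circ P](n,t) = \int_{0}^{1} D^{2}Q\bigl(P(n,t)+\tau\, \partial_{+}^{i}P(n,t)\bigr)\cdot \partial_{+}^{i}P(n,t)\, d\tau, \]
and plugging this into the pointwise concavity of $Q$ with $h = \partial_{+}^{i}P(n,t)$ produces the lower bound. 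Integrating in $t\in[0,\infty)$ and using $b(\infty)\geqslant 0$ (trivial on finite groups; by heat-kernel decay for rapidly decreasing $f,g$ on $\mathbbm{Z}^{N}$) completes the argument.

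\textbf{Main obstacle.} The continuous It\^o/chain-rule step of the classical proof is here replaced by first-order Taylor expansion along the discrete segments $[P(n,t),P(n+e_{i},t)]$. No positivity is lost because the Bellman concavity is an everywhere-pointwise inequality on $D$, but the entire method rests on two structural inputs: convexity of $D$ and the Markov/Jensen property of the discrete heat kernel that guarantees $P(n,t)\in D$; this is exactly what lets one Bellman function serve all products of discrete abelian groups simultaneously. The delicate piece is the construction of $Q$ with the sharp constant $p^{\ast}-1$ and the unconditional concavity estimate in $h$, which is imported from the bilinear embedding literature; once that is in hand, the transition to the discrete semigroup is structural and short.
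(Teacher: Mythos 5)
Your plan is essentially the paper's own proof: a Bellman function with the universal concavity $-\langle D^{2}Q\,h,h\rangle\geqslant 2|h_{f}||h_{g}|$, the vector of heat extensions $P=(\tilde f,\tilde g,\widetilde{|f|^{p}},\widetilde{|g|^{q}})$ kept inside the convex domain via the Markov/Jensen property, Taylor's theorem in place of the chain rule to convert the discrete second difference into the quadratic form of $D^{2}Q$ evaluated along the segment, and then integrating in $t$. You differentiate the already-summed quantity $b(t)=\sum_{n}Q(P(n,t))$ and sum by parts; the paper instead proves the pointwise dissipation inequality $(\partial_{t}-\Delta)(B\circ\tilde{\tmmathbf{v}})\geqslant\Gamma$ and then transfers it to the global level by convolving $b(n,t)$ with the heat kernel (Duhamel) before summing over $n$, which cleanly justifies the integration by parts on $\mathbbm{Z}^{N}$ via the kernel's exponential decay. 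These are the same argument up to which step one globalizes.

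There is, however, a sign error in your dissipation estimate, and you should correct it before this is usable. Running the computation: summation by parts gives $-\tfrac{d}{dt}b(t)=\sum_{i,n}\langle\partial_{+}^{i}(\nabla Q\circ P),\partial_{+}^{i}P\rangle$, and the fundamental theorem of calculus writes the summand as $\int_{0}^{1}\langle D^{2}Q(\cdot)\,h,h\rangle\,d\tau$ with $h=\partial_{+}^{i}P$. Since the concavity hypothesis says $\langle D^{2}Q\,h,h\rangle\leqslant-2|h_{f}||h_{g}|\leqslant0$, this yields $-\tfrac{d}{dt}b(t)\leqslant-2\sum_{i,n}|\partial_{+}^{i}\tilde f||\partial_{+}^{i}\tilde g|$, i.e. $\tfrac{d}{dt}b(t)\geqslant2\sum_{i,n}|\partial_{+}^{i}\tilde f||\partial_{+}^{i}\tilde g|\geqslant0$: the Bellman quantity \emph{increases} along the heat flow here, exactly as the paper's inequality $(\partial_{t}-\Delta)(B\circ\tilde{\tmmathbf{v}})\geqslant\Gamma$ (after summing in $n$ and killing the $\Delta$ term by parts) says. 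Your claim $-\tfrac{d}{dt}b(t)\geqslant2\sum\ldots$ is the reverse inequality and is false. The conclusion is salvageable because the bounds are symmetric: integrate $\tfrac{d}{dt}b\geqslant2\sum\Gamma$ over $[0,T]$, use $b(0)\geqslant0$ and the H{\"o}lder/size bound $b(T)\leqslant(p^{\ast}-1)\|f\|_{p}\|g\|_{q}$ uniformly in $T$, and let $T\to\infty$; this also sidesteps the limit $b(\infty)$ on $\mathbbm{Z}^{N}$, which is delicate. But as written, the lemma you state is wrong and the integration step that follows ($b(0)-b(\infty)\geqslant\int\ldots$) is built on it.

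Two lesser remarks. First, Jensen gives $|\tilde f|^{p}\leqslant\widetilde{|f|^{p}}$, which only places $P(n,t)$ in the closed domain $\overline{D}$; the Bellman function is typically constructed (and mollified) on an open domain, so a compactness/$\varepsilon$-neighborhood argument is needed, as in the paper's Theorem on $B_{\varepsilon,p,K}$. Second, the additive size bound $Q\leqslant C_{p}(X+Y)$ followed by scaling is fine but numerically equivalent to the multiplicative bound $Q\leqslant(p^{\ast}-1)X^{1/p}Y^{1/q}$ the paper uses; either works, but you should make sure the constant $C_{p}$ you import is normalized so that $\inf_{\lambda}C_{p}(\lambda^{p}\|f\|_{p}^{p}+\lambda^{-q}\|g\|_{q}^{q})=(p^{\ast}-1)\|f\|_{p}\|g\|_{q}$.
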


\begin{theorem}
  \label{T: bilinear embedding Choi}Let $f$ and $g$ be real valued test
  functions on $G^{N}$ and $p$ and $q$ conjugate exponents. Then we have the
  estimate
  \begin{eqnarray*}
    2 \sum_{i=1}^{N} \int^{\infty}_{0} \sum_{G^{N}} \left[ \partial^{i}_{+}
    \tilde{f} \, \partial^{i}_{+} \tilde{g} \right]_{\pm} \mathd t \leqslant
    \mathfrak{C}_{p} \| f \|_{p} \| g \|_{q} ,
  \end{eqnarray*}
  where $\left[ \, \cdot \, \right]_{\pm}$ denotes positive and negative
  parts.
\end{theorem}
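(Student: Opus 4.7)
The plan is to run a Bellman-function argument on the heat semigroup of $G^{N}$, in parallel with what one does for Theorem \ref{T: bilinear embedding p minus 1}. Let $B = B_{\min \alpha , \max \alpha ,p} : \mathbbm{R}^{2} \to \mathbbm{R}$ denote the Choi Bellman function associated with the asymmetric martingale transform problem having coefficients in $[ \min \alpha , \max \alpha ]$. Its two essential features are the size estimate
\[
B( x,y ) \leqslant \mathfrak{C}_{\min \alpha , \max \alpha ,p} \Bigl( \tfrac{|x|^{p}}{p} + \tfrac{|y|^{q}}{q} \Bigr)
\]
and a one-sided Hessian-type inequality
\[
-\bigl\langle \tmop{Hess} B (x,y) \cdot (h,k),(h,k) \bigr\rangle \;\geqslant\; 2 \, [hk]_{\pm}
\]
valid along every chord in $\mathbbm{R}^{2}$; the existence of such a function with this precise constant is the content of \cite{Cho1992a}.

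Given real-valued test functions $f,g$ on $G^{N}$, form their heat extensions $\tilde{f},\tilde{g}$ and set $\Phi ( t ) = \sum_{n \in G^{N}} B ( \tilde{f} ( n,t ) , \tilde{g} ( n,t ) )$. Differentiating in $t$, using the heat equation, and performing the exact discrete summation by parts $\sum_{n} u \, \Delta v = - \sum_{i} \sum_{n} \partial^{i}_{+} u \, \partial^{i}_{+} v$ (which follows from self-adjointness of $\Delta$), one arrives at
\[
- \Phi' ( t ) = \sum_{i=1}^{N} \sum_{n} \Bigl( \partial^{i}_{+} [ B_{u} ( \tilde{f} , \tilde{g} ) ] \cdot \partial^{i}_{+} \tilde{f} + \partial^{i}_{+} [ B_{v} ( \tilde{f} , \tilde{g} ) ] \cdot \partial^{i}_{+} \tilde{g} \Bigr) ( n,t ).
\]
The key difficulty relative to the continuous setting is that $\partial^{i}_{+} [ B_{u} ( \tilde{f} , \tilde{g} ) ] ( n )$ is not a pointwise combination of Hessian entries of $B$ with discrete gradients. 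To circumvent this, I would apply the fundamental theorem of calculus along the segment from $( \tilde{f} ( n,t ) , \tilde{g} ( n,t ) )$ to $( \tilde{f} ( n+e_{i} ,t ) , \tilde{g} ( n+e_{i} ,t ) )$, rewriting the bracketed quantity as the $s$-average of a Hessian quadratic form in $( \partial^{i}_{+} \tilde{f} , \partial^{i}_{+} \tilde{g} )$ evaluated along that segment. The global Hessian inequality for $B$ then produces
\[
- \Phi' ( t ) \;\geqslant\; 2 \sum_{i=1}^{N} \sum_{n} \bigl[ \partial^{i}_{+} \tilde{f} ( n,t ) \, \partial^{i}_{+} \tilde{g} ( n,t ) \bigr]_{\pm} .
\]

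Integrating in $t \in [ 0, \infty )$, the boundary term at infinity vanishes since $\tilde{f},\tilde{g}$ dissipate to their (zero) means, while the boundary term at $t=0$ is handled via the size estimate together with Young's inequality after a bi-homogeneous rescaling $f \mapsto \lambda f$, $g \mapsto \lambda^{-1} g$ and optimization in $\lambda > 0$; this yields $\Phi ( 0 ) \leqslant \mathfrak{C}_{\min \alpha , \max \alpha ,p} \| f \|_{p} \| g \|_{q}$ and hence the stated bilinear inequality.

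The main obstacle I anticipate is ensuring that Choi's function satisfies the Hessian inequality along \emph{all} chords in $\mathbbm{R}^{2}$---and not merely infinitesimally---with the constant $2$ calibrated to match the factor appearing in the theorem. This globality is precisely what the non-local nature of the discrete derivatives $\partial^{i}_{\pm}$ demands, and it is the mechanism by which the sharp Choi constant (rather than an inflated version) is transferred from the martingale world to the discrete group setting. A secondary task is to justify the termwise differentiation and summation-by-parts manipulations rigorously when $G = \mathbbm{Z}$, which follows from the rapid decay of the discrete heat kernel acting on test functions.
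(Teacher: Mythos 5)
Your argument contains a sign slip that is not fatal in itself, but it obscures a genuine obstruction. After discrete summation by parts and the fundamental theorem of calculus along the chord from $\tilde{\tmmathbf{v}}(n,t)$ to $\tilde{\tmmathbf{v}}(n+e_i,t)$, one obtains
$-\Phi'(t)=\sum_{i}\sum_{n}\int_0^1\bigl\langle \mathd^2_{\tmmathbf{v}}B\,(\partial^i_+\tilde f,\partial^i_+\tilde g),(\partial^i_+\tilde f,\partial^i_+\tilde g)\bigr\rangle\,\mathd s$, which, under the stated concavity $-\mathd^2 B\geqslant 2[\mathd\tmmathbf{f}\,\mathd\tmmathbf{g}]_{\pm}$, is bounded \emph{above} by $-2\sum_{i,n}[\partial^i_+\tilde f\,\partial^i_+\tilde g]_{\pm}$; that is, $\Phi$ is \emph{increasing} with $\Phi'(t)\geqslant 2\sum_{i,n}[\partial^i_+\tilde f\,\partial^i_+\tilde g]_{\pm}$, the opposite of what you wrote. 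With the correct sign, the boundary term to control is $\sup_t\Phi(t)$ (handled by the heat semigroup's $L^p$-contraction together with the size estimate and rescaling), while $\Phi(0)$ needs a \emph{lower} bound, i.e.\ one needs $B\geqslant 0$.

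That is where the real gap lies. A two-variable function $B:\mathbbm{R}^2\to\mathbbm{R}$ that is $C^2$, satisfies $0\leqslant B(x,y)\leqslant\mathfrak{C}_p\bigl(\tfrac{|x|^p}{p}+\tfrac{|y|^q}{q}\bigr)$, and obeys $-\langle\mathd^2 B\,(h,k),(h,k)\rangle\geqslant 2[hk]_+$ \emph{at every point} does not exist: the size bound forces $B(0,0)=0$, so nonnegativity makes the origin a minimum, whence $\mathd^2 B(0,0)\geqslant 0$; taking $(h,k)=(1,1)$ then gives $-\langle\mathd^2 B(0,0)(1,1),(1,1)\rangle\leqslant 0<2$, contradicting the Hessian condition. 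Since the heat extensions of test functions pass through the origin, you cannot avoid this point. This is precisely why the paper works with a \emph{four}-variable function $C^{\pm}_p(\tmmathbf{F},\tmmathbf{G},\tmmathbf{f},\tmmathbf{g})$ on the domain $\{|\tmmathbf{f}|^p<\tmmathbf{F},\,|\tmmathbf{g}|^q<\tmmathbf{G}\}$ with size bound $0\leqslant C^{\pm}_p\leqslant\mathfrak{C}_p\tmmathbf{F}^{1/p}\tmmathbf{G}^{1/q}$: the auxiliary variables $\tmmathbf{F}=\widetilde{|f|^p}$, $\tmmathbf{G}=\widetilde{|g|^q}$ keep the relevant points away from the boundary where the function degenerates, and the multiplicative size estimate feeds directly into H\"older. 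Also note that your claim that the existence of such a Bellman function ``is the content of \cite{Cho1992a}'' overstates matters: Choi proves the martingale inequality, and the passage to a Bellman function with the Hessian property requires the averaging construction carried out in Lemma~\ref{L: Choi Bellman function dyadic}. Finally, the obstacle you singled out---globality of the Hessian inequality along chords---is not an obstacle at all: exactly your FTC-along-segments argument shows it follows automatically from the pointwise Hessian condition because the right-hand side $2[hk]_{\pm}$ is independent of the base point.
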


\section{The $p^{\ast}  - 1$ estimates.}

We take advantage of an intimate connection between differentially subordinate
martingales and representation formulae for our type of singular integrals
using heat extensions. This idea was first used in {\cite{PetVol2002a}} \ in a
weighted context. This here is the first application to a discrete group.

\subsection{Representation formula}

We will be using continuous in time heat extensions $P_{t} f:G^{N} \times [ 0,
\infty ) \rightarrow \mathbbm{C}$ of a function $f:G^{N} \rightarrow
\mathbbm{C}$. Precisely, we set $P_{t} \assign e^{t \Delta}$, so that $( P_{t}
f ) ( t ) \assign e^{t \Delta} f$. Denoting for convenience $\tilde{f} ( t )
\assign P_{t} f$, we have that the function $\tilde{f}$ solves the
semi-continuous heat equation $\partial_{t} \tilde{f} - \Delta \tilde{f} =0$
with initial condition $\tilde{f} ( 0 ) =f$. The group structure allows us to
express the semi-group $P_{t}$ in terms of a convolution kernel involving the
elementary solution $K:G^{N} \times [ 0, \infty ) \rightarrow \mathbbm{C}$
solution to the semi-continuous heat equation with initial data $K ( n,0 )
\assign \delta_{0} ( n )$, where $\delta_{0} ( 0 ) =1$ and $\delta_{0} ( n )
=0$ for $n \neq 0$. We have $K \geqslant 0$ and for all $( n,t )$,
\begin{eqnarray}
  \tilde{f} ( n,t ) = ( K ( t ) \ast f ) ( n ) = \sum_{m \in G^{N}} K ( m,t )
  f ( 0,n-m ) , \sum_{m \in G^{N}} K ( m,t ) =1 \label{eq:
  elementary properties heat kernel}
\end{eqnarray}
We will note in general $K \left( \cdot , \cdot \, ;0,m \right)$ the
elementary solution with initial condition $K ( 0 ) \assign \delta_{0} \left(
\cdot \, -m \right)$ translated by $m \in G^{N}$, namely $K \left( n,t \, ;m,0
\right) =K ( n-m,t )$. It is well known that this kernel, defined on a
 regular graph such as $G^{N}$ decays exponentially with respect to its variable $n$, for
$G$ an infinite group.

Let $R^{2}_{i}$ be the square of the $i$th Riesz transform, as defined above.
Let $f$ and $g$ be test functions. Let us first note, that $R_{i}$ maps
constants to $0$, so we may assume that in the formulae below, $\hat{g} ( 0 )
=0.$

\begin{lemma}
  \label{L: representation formula}If $\hat{g} ( 0 ) =0$ then
  \begin{eqnarray}
    ( f,R^{2}_{j} g ) =-2 \int^{\infty}_{0} \sum_{G^{N}} \partial^{j}_{+}
    \tilde{f} ( n,t ) \overline{\partial_{+}^{j} \tilde{g} ( n,t )} \mathd  t
    \label{eq: representation formula}
  \end{eqnarray}
  and the sums and integrals that arise converge absolutely.
\end{lemma}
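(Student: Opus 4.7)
The plan is to reduce both sides to integrals over the character group $\widehat{G^N}$ by Plancherel and verify they coincide pointwise on the Fourier side. The heavy lifting is already in the paper: the Fourier symbol of $\partial^j_+$ is $m_j(\xi)=2ie^{\pi i\xi_j}\sin(\pi\xi_j)$ (with the analogous expression over $(\mathbb{Z}/m\mathbb{Z})^N$), whence $|m_j(\xi)|^2 = 4\sin^2(\pi\xi_j)$, and the heat semigroup has the obvious spectral representation $\widehat{\widetilde f}(\xi,t) = e^{t\hat\Delta(\xi)}\hat f(\xi)$. Combining these yields, for every fixed $t\geqslant 0$,
\begin{eqnarray*}
\sum_{n\in G^N}\partial^j_+\widetilde f(n,t)\,\overline{\partial^j_+\widetilde g(n,t)} = \int_{\widehat{G^N}} |m_j(\xi)|^2\,e^{2t\hat\Delta(\xi)}\,\hat f(\xi)\,\overline{\hat g(\xi)}\,d\mu(\xi).
\end{eqnarray*}

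Next I would integrate in $t$. Since $\hat g(0)=0$ and $\hat\Delta$ vanishes only at $\xi=0$, the function $e^{2t\hat\Delta(\xi)}$ is integrable in $t$ on the support of $\hat g$, with $\int_0^\infty e^{2t\hat\Delta(\xi)}\,dt = -1/(2\hat\Delta(\xi))$. Pulling the $t$-integral inside by Fubini (justification postponed) gives
\begin{eqnarray*}
2\int_0^\infty\sum_n\partial^j_+\widetilde f\,\overline{\partial^j_+\widetilde g}\,\mathd t = \int \frac{|m_j(\xi)|^2}{-\hat\Delta(\xi)}\,\hat f\,\overline{\hat g}\,d\mu.
\end{eqnarray*}
The integrand equals $-\widehat{R_j^2}(\xi)\hat f(\xi)\overline{\hat g(\xi)}$ by the explicit formula $\widehat{R_j^2}=-4\sin^2(\pi\xi_j)/(4\sum_i\sin^2(\pi\xi_i))$ recorded in the introduction. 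Because this multiplier is real, a second application of Plancherel identifies the right-hand side with $-(f,R_j^2g)$, which is the claim after multiplication by $-1$.

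The main technical point is the Fubini step and, at the same time, the absolute convergence asserted in the lemma. I would derive both from the elementary bound $|m_j(\xi)|^2\leqslant -\hat\Delta(\xi)$, which yields the energy estimate
\begin{eqnarray*}
\int_0^\infty \sum_n |\partial^j_+\widetilde f(n,t)|^2\,\mathd t = \int\frac{|m_j(\xi)|^2}{-2\hat\Delta(\xi)}|\hat f(\xi)|^2\,d\mu(\xi) \leqslant \tfrac{1}{2}\|f\|_2^2,
\end{eqnarray*}
and similarly for $g$ (the quotient stays bounded at the origin because $\hat g(0)=0$). Cauchy--Schwarz in the variables $(n,t)$ then bounds the joint sum-integral of $|\partial^j_+\widetilde f|\,|\partial^j_+\widetilde g|$ by $\tfrac{1}{2}\|f\|_2\|g\|_2<\infty$, which both validates Fubini and proves the absolute convergence statement. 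The hypothesis $\hat g(0)=0$ is essential: without it, $\hat f\overline{\hat g}/\hat\Delta$ would be non-integrable near $\xi=0$, reflecting the fact that $R_j^2$ annihilates constants and is only defined modulo them.
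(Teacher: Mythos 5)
Your argument is essentially the same as the paper's: both pass to the Fourier side via Plancherel, write $\widehat{R_j^2}$ as a $t$-integral of $e^{2t\hat\Delta(\xi)}$ against $|m_j(\xi)|^2$, and factor the integrand into $\widehat{\partial_+^j\tilde f}\,\overline{\widehat{\partial_+^j\tilde g}}$. Your addition of the energy estimate $|m_j(\xi)|^2\leqslant-\hat\Delta(\xi)$ plus Cauchy--Schwarz to justify Fubini and the absolute-convergence claim is a welcome bit of rigor that the paper leaves implicit, but it does not change the route.
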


\begin{proof}
  In the infinite case, $\mathbbm{Z}$, we first notice that for $\xi \neq 0$,
  we have
  \begin{eqnarray*}
    2 \int^{\infty}_{0} e^{-8t \sum_{i} \sin^{2} ( \pi \xi_{i} )}
    \mathd t= 
    \left.- \frac{e^{-8t \sum_{i} \sin^{2} ( \pi \xi_{i} )}}{4 \sum_{i} \sin^{2} (
    \pi \xi_{i} )} \right\lvert ^{\infty}_{0} = \frac{1}{4 \sum_{i} \sin^{2} ( \pi
    \xi_{i} )}  .
  \end{eqnarray*}
  Now, we calculate
  \begin{eqnarray*}
    ( f,R^{2}_{j} g ) & = & \sum_{\mathbbm{Z}^{N}} f ( n ) \overline{R^{2}_{j}
    g ( n )} = \int_{\mathbbm{T}^{N}} \hat{f} ( \xi ) \frac{-4 \sin^{2} ( \pi
    \xi_{j} )}{4 \sum_{i} \sin^{2} ( \pi \xi_{i} )} \overline{\hat{g} ( \xi )}
    \mathd  A ( \xi )\\
    & = & -2 \int_{\mathbbm{T}^{N}} \int^{\infty}_{0} 4 \sin^{2} ( \pi
    \xi_{j} ) e^{-8t \sum_{i} \sin^{2} ( \pi \xi_{i} )} \widehat{ f} ( \xi )
    \overline{\hat{g} ( \xi )} \mathd t  \mathd  A ( \xi )\\
    & = & -2 \int_{\mathbbm{T}^{N}} \int^{\infty}_{0} 2i e^{\pi i \xi_{j}}
    \sin ( \pi \xi_{j} ) e^{-4t \sum_{i} \sin^{2} ( \pi \xi_{i} )} \widehat{
    f} ( \xi ) \times\\
    &  & \qquad \times \; \overline{2i e^{\pi i \xi_{j}} \sin ( \pi \xi_{j} ) e^{-4t
    \sum_{i} \sin^{2} ( \pi \xi_{i} )} \hat{g} ( \xi )} \mathd t  \mathd A (
    \xi )\\
    & = & -2 \int^{\infty}_{0} \sum_{\mathbbm{Z}^{N}} \partial^{j}_{+}
    \tilde{f} ( n,t ) \overline{\partial_{+}^{j} \tilde{g} ( n,t )} \mathd  t
  \end{eqnarray*}
  Here, $\tilde{f} ( n,t )$ denotes the semi-continuous heat extension of the
  function $f$.{\color{red} }

  In the cyclic case, there are not many changes. Observe that even in
  $\mathbbm{Z}/2\mathbbm{Z}$, where there is no `room' for three points in a
  row, the Laplacian is a well defined negative operator and the equation
  diffuses correctly. Also here, we may assume $\hat{g} ( 0 ) =0 \nosymbol .$
  As before, we have when $\xi \neq 0$
  \begin{eqnarray*}
    2 \int^{\infty}_{0} e^{-8t \sum_{i} \sin^{2} \left( \frac{\pi}{m} \xi_{i}
    \right)} \mathd t=  \left.- \frac{e^{-8t \sum_{i} \sin^{2} \left(
    \frac{\pi}{m} \xi_{i} \right)}}{4 \sum_{i} \sin^{2} \left( \frac{\pi}{m}
    \xi_{i} \right)} \right\lvert^{\infty}_{0} = \frac{1}{4 \sum_{i} \sin^{2}
    \left( \frac{\pi}{m} \xi_{i} \right)}  .
  \end{eqnarray*}
  We calculate in the same manner
  \begin{eqnarray*}
    ( f,R^{2}_{j} g ) & = & \sum_{( \mathbbm{Z}/m\mathbbm{Z} )^{N}} f ( n )
    \overline{R^{2}_{j} g ( n )} = \sum_{( \mathbbm{Z}/m\mathbbm{Z} )^{N}}
    \hat{f} ( \xi ) \frac{-4 \sin^{2} \left( \frac{\pi}{m} \xi_{j} \right)}{4
    \sum_{i} \sin^{2} \left( \frac{\pi}{m} \xi_{i} \right)} \overline{\hat{g}
    ( \xi )}\\
    & = & -2 \sum_{( \mathbbm{Z}/m\mathbbm{Z} )^{N}} \int^{\infty}_{0} 4
    \sin^{2} \left( \frac{\pi}{m} \xi_{j} \right) e^{-8t \sum_{i} \sin^{2}
    \left( \frac{\pi}{m} \xi_{i} \right)} \widehat{ f} ( \xi )
    \overline{\hat{g} ( \xi )} \mathd t \\
    & = & -2 \sum_{( \mathbbm{Z}/m\mathbbm{Z} )^{N}} \int^{\infty}_{0} 2i
    e^{\frac{\pi i}{m} \xi_{j}} \sin \left( \frac{\pi}{m} \xi_{j} \right)
    e^{-4t \sum_{i} \sin^{2} \left( \frac{\pi}{m} \xi_{i} \right)} \widehat{
    f} ( \xi ) \times\\
    &  &\qquad  \times \overline{2i e^{\frac{\pi i}{m} \xi_{j}} \sin \left(
    \frac{\pi}{m} \xi_{j} \right) e^{-4t \sum_{i} \sin^{2} \left(
    \frac{\pi}{m} \xi_{i} \right)} \hat{g} ( \xi )} \mathd t \\
    & = & -2 \int^{\infty}_{0} \sum_{( \mathbbm{Z}/m\mathbbm{Z} )^{N}}
    \partial^{j}_{+} \tilde{f} ( n,t ) \overline{\partial_{+}^{j} \tilde{g} (
    n,t )} \mathd  t
  \end{eqnarray*}
  This concludes the proof of Lemma \ref{L: representation formula}.
\end{proof}

We are going to use the Bellman function method to derive our estimates. In
order to do so, we are going to need a tool to control the terms we see on the
right hand side of our representation formulae. This is the content of the
next subsection.

\subsection{Bellman function: dissipation and size estimates}

In this section we derive the existence of a function with a certain convexity
condition. There is an explicit construction of (upto smoothness) the type of
function we need in Vasyunin-Volberg {\cite{VasVol2010a}} of an elaborate complexity,
as well as a new construction in Ba{\~n}uelos-Os{\c{e}}kowski {\cite{BanOse2014a}}. There is
also a much simpler explicit expression in {\cite{NazTre1996a}} at the cost of
a factor, which does bother us for our purposes. In our proof, we only need
the existence, not the explicit expression. This part is standard for readers
with some background in Bellman \ functions, however this argument is not
usually written carefully. We will also apply similar considerations to derive
a Bellman function that will give us our estimates in terms of the Choi
constant.

\begin{theorem}
  \label{T: Bellman function with smoothing}For any 1< $p< \infty$ we define
  \begin{eqnarray*}
    D_{p} \assign \{ \tmmathbf{v}= (
    \tmmathbf{F},\tmmathbf{G},\tmmathbf{f},\tmmathbf{g} ) \subset
    \mathbbm{R}^{+} \times \mathbbm{R}^{+} \times \mathbbm{C} \times
    \mathbbm{C}: | \tmmathbf{f} |^{p} <\tmmathbf{F}, | \tmmathbf{g} |^{q}
    <\tmmathbf{G} \} .
  \end{eqnarray*}
  Let $K$ be any compact subset of $D_{p}$ and let $\varepsilon$ be an
  arbitrary small positive number. Then there exists a function
  $B_{\varepsilon ,p,K} ( \tmmathbf{v} )$ that is infinitely differentiable in
  an $\varepsilon$-neighborhood of $K$ and such that
  \begin{eqnarray*}
    0 \leqslant B_{\varepsilon ,p,K} ( \tmmathbf{v} ) \leqslant ( 1+
    \varepsilon C_{K} ) ( p^{\ast} -1 ) \tmmathbf{F}^{1/p} \tmmathbf{G}^{1/q}
    ,
  \end{eqnarray*}
  and
  \begin{eqnarray}
    - \mathd_{\tmmathbf{v}}^{2} B_{\varepsilon ,p,K} ( \tmmathbf{v} )
    \geqslant 2 | \mathd \tmmathbf{f} |   | \mathd \tmmathbf{g} | . \label{eq:
    dissipation infinitesimal}
  \end{eqnarray}
\end{theorem}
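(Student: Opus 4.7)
The plan is the standard Bellman-function smoothing: take a known (non-smooth) $B_0 \geqslant 0$ on $D_p$ that realizes the sharp Burkholder bound
\begin{eqnarray*}
B_0 ( \tmmathbf{v} ) \leqslant ( p^{\ast} -1 ) \tmmathbf{F}^{1/p} \tmmathbf{G}^{1/q}, \qquad -\mathd^2_{\tmmathbf{v}} B_0 \geqslant 2 | \mathd \tmmathbf{f} | \, | \mathd \tmmathbf{g} |
\end{eqnarray*}
(in the distributional sense), and regularize it by convolution with a smooth bump. Such a $B_0$ is available from Vasyunin-Volberg or Ba{\~n}uelos-Os{\c{e}}kowski and is imported as a black box; no explicit formula for it is needed below.

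Compactness of $K \subset D_p$ furnishes $\delta_0 > 0$ such that the $2 \delta_0$-neighborhood $K_{2 \delta_0}$ still lies in $D_p$ and both $\tmmathbf{F}$ and $\tmmathbf{G}$ are bounded below there. Choose a smooth, nonnegative, radially symmetric mollifier $\phi_\delta$ of unit mass supported in $\{ |w| < \delta \}$ with $\delta \leqslant \varepsilon < \delta_0 / 2$, and set $B_{\varepsilon ,p,K} := B_0 \ast \phi_\delta$. This is automatically $C^\infty$ on an $\varepsilon$-neighborhood of $K$ and inherits non-negativity from $B_0$.

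The upper size bound follows from the fact that $( \tmmathbf{F} , \tmmathbf{G} ) \mapsto \tmmathbf{F}^{1/p} \tmmathbf{G}^{1/q}$ is smooth on $K_{\delta_0}$ with first derivatives bounded uniformly (since $\tmmathbf{F}$ and $\tmmathbf{G}$ stay away from zero there), giving a Lipschitz-type bound
\begin{eqnarray*}
\tmmathbf{F} ( \tmmathbf{v} +w )^{1/p} \tmmathbf{G} ( \tmmathbf{v} +w )^{1/q} \leqslant ( 1+ C_K |w| ) \tmmathbf{F}^{1/p} \tmmathbf{G}^{1/q}, \qquad \tmmathbf{v} \in K, \; |w| \leqslant \delta ;
\end{eqnarray*}
integrating against $\phi_\delta$ and using $B_0 \leqslant ( p^{\ast} -1 ) \tmmathbf{F}^{1/p} \tmmathbf{G}^{1/q}$ yields the required $( 1+ \varepsilon C_K )( p^{\ast} -1 ) \tmmathbf{F}^{1/p} \tmmathbf{G}^{1/q}$ estimate once $\delta = \varepsilon$. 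The Hessian inequality then transfers to $B_{\varepsilon ,p,K}$ essentially for free: convolution commutes with $\mathd^2_{\tmmathbf{v}}$, and the right-hand side $2 | \mathd \tmmathbf{f} | \, | \mathd \tmmathbf{g} |$ is independent of $\tmmathbf{v}$, so integrating the distributional inequality for $B_0$ against the probability density $\phi_\delta$ produces the same pointwise inequality for the now-smooth $B_{\varepsilon ,p,K}$.

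The step that demands the most care is the very first one: asserting that a base $B_0$ whose Hessian bound holds in a sense strong enough to survive mollification actually exists. For the explicit constructions cited, $B_0$ is piecewise $C^2$ with $C^1$ matching across the interfaces, so the singular part of the distributional Hessian is a non-positive measure in the transversal direction (a concave kink), which only \emph{reinforces} the bound $-\mathd^2 B_0 \geqslant 2 | \mathd \tmmathbf{f} | \, | \mathd \tmmathbf{g} |$ rather than violating it. Once that is granted, the remainder of the argument is the routine mollification bookkeeping sketched above.
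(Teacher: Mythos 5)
Your proposal is correct, but it takes a genuinely different route from the paper's proof. The paper does \emph{not} import an explicit Bellman function as a black box. Instead it proves Lemma \ref{L: Bellman function dyadic}: from Burkholder's $p^{\ast}-1$ martingale theorem one builds an abstract candidate
$B_{p}(\tmmathbf{v}) = \sup_{f,g} \frac{1}{4|J|}\sum_{I} |I|\,|\langle f\rangle_{I_{+}}-\langle f\rangle_{I_{-}}|\,|\langle g\rangle_{I_{+}}-\langle g\rangle_{I_{-}}|$
over test functions with prescribed averages, and the desired \emph{midpoint} concavity $B(\tmmathbf{v})\geqslant \tfrac12 B(\tmmathbf{v}_{+})+\tfrac12 B(\tmmathbf{v}_{-})+\tfrac14|\tmmathbf{f}_{+}-\tmmathbf{f}_{-}||\tmmathbf{g}_{+}-\tmmathbf{g}_{-}|$ falls out of the supremum definition by splitting the dyadic sum at the top interval, with no regularity analysis needed. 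The mollification step then converts midpoint concavity to a pointwise Hessian bound. You instead take one of the explicit constructions of Vasyunin--Volberg or Ba\~{n}uelos--Os\c{e}kowski as your $B_{0}$, invoke a distributional Hessian inequality, and argue the singular part of the distributional Hessian across interfaces is a favorable sign. Both routes are legitimate, and the paper even remarks that readers averse to probability may substitute the explicit expression; but be aware that your route shifts the burden onto verifying the asserted piecewise $C^{2}$, $C^{1}$-matching structure and the sign of the kink measures for the particular explicit formula chosen, which is a nontrivial check that the paper's abstract supremum construction avoids entirely. The crucial observation you both share, and use correctly, is that the right-hand side $2|\mathd\tmmathbf{f}||\mathd\tmmathbf{g}|$ is a \emph{universal} quadratic form independent of the base point, so convolution of the Hessian inequality against a probability density preserves it verbatim.
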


Before we proceed with the proof of this theorem, we will require a lemma that
follows directly from Burkholder's famous $p^{\ast} -1$ estimate for
differentially subordinate martingales. Readers who do not wish to admit any
probability, can make use of an explicit expression mentioned above.

\begin{lemma}
  \label{L: Bellman function dyadic}For every $1<p< \infty$ there exists a
  function $B_{p}$ of four variables $\tmmathbf{v}= (
  \tmmathbf{F},\tmmathbf{G},\tmmathbf{f},\tmmathbf{g} )$ in the domain
  $\overline{D_{p}} = \{ ( \tmmathbf{F},\tmmathbf{G},\tmmathbf{f},\tmmathbf{g}
  ) \in \mathbbm{R}^{+} \times \mathbbm{R}^{+} \times \mathbbm{C} \times
  \mathbbm{C}: | \tmmathbf{f} |^{p} \leqslant \tmmathbf{F}, | \tmmathbf{g}
  |^{q} \leqslant \tmmathbf{G} \}$ so that
  \begin{eqnarray*}
    0 \leqslant B ( \tmmathbf{v} ) \leqslant ( p^{\ast} -1 )
    \tmmathbf{F}^{1/p} \tmmathbf{G}^{1/q} ,
  \end{eqnarray*}
  and
  \begin{eqnarray}
    B ( \tmmathbf{v} ) \geqslant \frac{1}{2} B ( \tmmathbf{v}_{+} ) +
    \frac{1}{2} B ( \tmmathbf{v}_{-} )   \noplus \noplus + \frac{1}{4} |
    \tmmathbf{f}_{+} -\tmmathbf{f}_{-} |   | \tmmathbf{g}_{+}
    -\tmmathbf{g}_{-} |   \label{eq: dissipation dyadic}
  \end{eqnarray}
  if $\tmmathbf{v}_{+} +\tmmathbf{v}_{-} =2\tmmathbf{v}$ and $\tmmathbf{v}$,
  $\tmmathbf{v}_{+}$, $\tmmathbf{v}_{-}$ are in the domain. Here $\|
  \cdot \|  $denotes the usual $\ell_{2}$ norm in $\mathbbm{C}.$ 
\end{lemma}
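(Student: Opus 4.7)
My plan is to construct $B$ as a martingale supremum, extracting its properties directly from Burkholder's sharp $p^{*}-1$ inequality for differentially subordinate martingales. On the canonical dyadic Rademacher probability space, define
\begin{equation*}
B ( \tmmathbf{F}, \tmmathbf{G}, \tmmathbf{f}, \tmmathbf{g} ) := \sup \mathbb{E} \Bigl[ \sum_{n \geqslant 1} | \Delta X_n | \, | \Delta Y_n | \Bigr] ,
\end{equation*}
with the supremum over all pairs of dyadic martingales $(X_n)$, $(Y_n)$ satisfying $X_0 = \tmmathbf{f}$, $Y_0 = \tmmathbf{g}$, $\mathbb{E} | X_\infty |^p \leqslant \tmmathbf{F}$ and $\mathbb{E} | Y_\infty |^q \leqslant \tmmathbf{G}$. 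The non-negativity $B \geqslant 0$ follows from the trivial constant martingales $X_n \equiv \tmmathbf{f}$, $Y_n \equiv \tmmathbf{g}$.

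The dyadic concavity (\ref{eq: dissipation dyadic}) is built into the supremum by concatenation. Given $\tmmathbf{v}_{\pm} \in \overline{D_p}$ with $\tmmathbf{v}_{+} + \tmmathbf{v}_{-} = 2 \tmmathbf{v}$ and $\varepsilon$-optimizing pairs $( X^{\pm}, Y^{\pm} )$ for $B ( \tmmathbf{v}_{\pm} )$, prepend a single Rademacher split $\sigma_1 \in \{ \pm 1 \}$ mapping $( \tmmathbf{f}, \tmmathbf{g} )$ to $( \tmmathbf{f}_{\pm}, \tmmathbf{g}_{\pm} )$ and then follow $( X^{\pm}, Y^{\pm} )$ conditionally on $\sigma_1$. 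The new pair is admissible for $\tmmathbf{v}$ by convexity of the constraints, its first increments are the deterministic $| \Delta X_1 | = \tfrac{1}{2} | \tmmathbf{f}_{+} - \tmmathbf{f}_{-} |$ and $| \Delta Y_1 | = \tfrac{1}{2} | \tmmathbf{g}_{+} - \tmmathbf{g}_{-} |$, and the tails average in the supremum to $\tfrac{1}{2} B ( \tmmathbf{v}_{+} ) + \tfrac{1}{2} B ( \tmmathbf{v}_{-} )$; together these recover (\ref{eq: dissipation dyadic}) up to $\varepsilon$.

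For the size bound I would use a dyadic sign-transform. Writing $\Delta X_n = d_n \sigma_n$ and $\Delta Y_n = e_n \sigma_n$ with $d_n, e_n$ predictable and $\sigma_n \in \{ \pm 1 \}$ the Rademacher variable, the product $d_n \overline{e_n}$ is itself predictable, so one picks a predictable unimodular $\epsilon_n$ with $\epsilon_n \Delta X_n \overline{\Delta Y_n} = | \Delta X_n | \, | \Delta Y_n |$; the martingale $\widetilde{X} := X_0 + \sum_k \epsilon_k \Delta X_k$ then satisfies $\widetilde{X} \ll X$ in the sense of differential subordination. Martingale orthogonality gives
\begin{equation*}
\mathbb{E} \sum_{n \geqslant 1} | \Delta X_n | \, | \Delta Y_n | = \mathbb{E} [ \widetilde{X}_\infty \overline{Y_\infty} ] - \tmmathbf{f} \, \overline{\tmmathbf{g}} ,
\end{equation*}
and H\"older combined with Burkholder's uncentered estimate $\| \widetilde{X}_\infty \|_p \leqslant ( p^{*} - 1 ) \| X_\infty \|_p$ bounds the first term in modulus by $( p^{*} - 1 ) \tmmathbf{F}^{1/p} \tmmathbf{G}^{1/q}$. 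Rerunning the argument with $-\epsilon_n$ in place of $\epsilon_n$ yields the symmetric bound; taking the better of the two together with $\mathbb{E} \sum | \Delta X_n | | \Delta Y_n | \geqslant 0$ gives the required $B \leqslant (p^{*} - 1) \tmmathbf{F}^{1/p} \tmmathbf{G}^{1/q}$.

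The main point to watch is that Burkholder's $p^{*}-1$ estimate must be applied to $\widetilde{X} \ll X$ directly, not to the centered pair $\widetilde{X} - \tmmathbf{f} \ll X - \tmmathbf{f}$: the latter path introduces a lossy triangle factor $\| X_\infty - \tmmathbf{f} \|_p \leqslant 2 \tmmathbf{F}^{1/p}$ that would spoil the exact match between the concavity constant $\tfrac{1}{4}$ and the size constant $p^{*}-1$ demanded by the statement. With the uncentered Burkholder step the constants align as required, and the existence of $B$ is established without recourse to the explicit formulas of Nazarov--Treil--Volberg or Vasyunin--Volberg cited in the text, although either of those would also do the job.
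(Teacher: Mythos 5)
Your proposal is correct and takes essentially the same approach as the paper: the paper defines $B_{p}$ as a supremum of a normalized Haar-coefficient pairing over functions on a fixed dyadic interval with prescribed averages, which is canonically the same object as your supremum over pairs of dyadic martingales, and both proofs rest on Burkholder's theorem together with a sign-selection/duality step for the upper bound and a gluing construction for the midpoint concavity. The extra care you take, averaging over $\pm\epsilon_{n}$ to cancel the initial cross term $\tmmathbf{f}\overline{\tmmathbf{g}}$, addresses a point the paper passes over quickly (equivalently one may start the transformed martingale at $0$), but the substance of the two arguments is the same.
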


\begin{proof}
  The existence of this function follows from Burkholder's theorem directly in
  one of its simplest forms (see {\cite{Bur1984a}} and {\cite{Bur1988a}}):
  
  \begin{theorem}
    (Burkholder) Let $( \Omega ,\mathfrak{F},P )$ be a probability space with filtration 
    $\mathfrak{F}= ( \mathfrak{F}_{n} ) \nocomma_{n \in \mathbbm{N}}$. 
    Let $X_{}$ and $Y_{}$ be complex valued martingales with
    differential subordination $| Y_{0} ( \omega ) | \leqslant | X_{0} (
    \omega ) |$ and $| Y_{n} ( \omega ) -Y_{n-1} ( \omega ) | \leqslant |
    X_{n} ( \omega ) -X_{n-1} ( \omega ) |$ for almost all $\omega \in \Omega$
    . Then $\| Y \|_{L^{p}} \leqslant ( p^{\ast} -1 ) \| X \|_{L^{p}}$ where
    the $L^{p}$ norms are in the sense of martingales.
  \end{theorem}
  
  Now, if $\mathcal{D}$ is the dyadic grid on the real line, let
  $\mathfrak{F}_{n}  $contain all dyadic intervals of size at least $2^{-n} |
  J |$. For a test function $f$ supported on a dyadic interval $J$, its dyadic
  approximation gives rise to a martingale $\forall \omega \in J,X_{0} (
  \omega ) =\mathbbm{E}f$, with
  \begin{eqnarray*}
    \forall n>0, \hspace{1em} X_{n} ( \omega ) = \sum_{I \in \mathcal{D} ( J )
    , | I | >2^{-n} | J |} ( f,h_{I} ) h_{I} ( \omega ) .
  \end{eqnarray*}
  With the help of random multiplications $\sigma = \left\{ \sigma_{I} \in
  S^{1} ; \hspace{1em} I \in \mathcal{D} ( J ) \right\}  $ one defines the
  martingale transform $Y$ of $X$ as $\forall \omega \in J,Y_{0} ( w ) =X_{0}
  ( w )$ and
  \begin{eqnarray*}
    \forall n>0, \hspace{1em} Y_{n} ( \omega ) = \sum_{I \in \mathcal{D} ( J )
    , | I | >2^{-n} | J |} \sigma_{I} ( f,h_{I} ) h_{I} ( \omega ) .
  \end{eqnarray*}
  Here $X$ and $Y$ enjoy the differential subordination property. Consequently, 
  Burkholder's theorem asserts that $\| Y \|_{p} \leqslant ( p^{\ast} -1
  ) \| X \|_{p}$. This implies that the operator
  \begin{eqnarray*}
    T_{\sigma} :f \mapsto \sum_{I \in \mathcal{D} ( J )} \sigma_{I} ( f,h_{I}
    ) h_{I}
  \end{eqnarray*}
  has a uniform $L^{p}$ bound of at most $p^{\ast} -1 \nosymbol .$ So for test
  functions $f$, $g$ supported in $J$ we have the estimate
  \begin{eqnarray*}
    \sup_{\sigma} \frac{1}{| J |} | ( T_{\sigma} f,g ) | \leqslant ( p^{\ast}
    -1 ) \langle | f |^{p} \rangle_{J}^{1/p} \langle | g |^{q}
    \rangle_{J}^{1/q} .
  \end{eqnarray*}
  Expanding the orthonormal series on the left and choosing the worst $\sigma$
  yields the estimate
  \begin{eqnarray*}
    \frac{1}{| J |} \sum_{I \in \mathcal{D} ( J )} | ( f,h_{I} ) |   | (
    g,h_{I} ) |   & = & \frac{1}{4 | J |} \sum_{I \in \mathcal{D} ( J )} | I |
    | \langle f \rangle_{I_{+}} - \langle f \rangle_{I_{-}} |   | \langle g
    \rangle_{I_{+}} - \langle g \rangle_{I_{-}} |\\
    & \leqslant & ( p^{\ast} -1 ) \langle | f |^{p}   \rangle_{J}^{1/p}
    \langle | g |^{q} \rangle_{J}^{1/q} .
  \end{eqnarray*}
  This means that the function we are looking for takes the form:
  \begin{eqnarray*}
    B_{p} ( \tmmathbf{F},\tmmathbf{G},\tmmathbf{f},\tmmathbf{g} ) & = &
    \sup_{f,g} \left\{ \frac{1}{4 | J |} \sum_{I \in \mathcal{D} ( J )} | I |
    | \langle f \rangle_{I_{+}} - \langle f \rangle_{I_{-}} |   | \langle g
    \rangle_{I_{+}} - \langle g \rangle_{I_{-}} | : \right.\\
  &&\left. \vphantom{ \frac{1}{4 | J |} \sum_{I \in \mathcal{D} ( J )}} \langle f \rangle_{J} =\tmmathbf{f}, \langle g \rangle_{J}
    =\tmmathbf{g}, \langle | f |^{p} \rangle_{J} =\tmmathbf{F}, \langle | g
    |^{q} \rangle_{J} =\tmmathbf{G} \right\} .
  \end{eqnarray*}
  The supremum runs over functions supported in $J$, but scaling shows that
  the function $B_{p}$ itself does not depend upon the interval $J$. The
  function $B_{p}$ has the following properties stated in the lemma:

  \begin{enumeratenumeric}
    \item We have $B_{p} \geqslant 0.$ This means that for any $(
    \tmmathbf{F},\tmmathbf{G},\tmmathbf{f},\tmmathbf{g} ) \in D_{p}$, one can
    find complex valued functions $f$ and $g$ such that $( \langle f
    \rangle_{J} , \langle g \rangle_{J} , \langle | f |^{p} \rangle_{J} ,
    \langle | g |^{q} \rangle_{J} ) = (
    \tmmathbf{F},\tmmathbf{G},\tmmathbf{f},\tmmathbf{g} )$. Otherwise the
    supremum would run over the empty set yielding $B_{p} (
    \tmmathbf{F},\tmmathbf{G},\tmmathbf{f},\tmmathbf{g} ) =- \infty$. On the
    contrary, as soon as there exist $( f,g )$ such that $( \langle f
    \rangle_{J} , \langle g \rangle_{J} , \langle | f |^{p} \rangle_{J} ,
    \langle | g |^{q} \rangle_{J} ) = (
    \tmmathbf{F},\tmmathbf{G},\tmmathbf{f},\tmmathbf{g} )$, the expression of
    $B_{p}$ ensures that $B_{p} (
    \tmmathbf{F},\tmmathbf{G},\tmmathbf{f},\tmmathbf{g} ) \geqslant 0$.
    
    Let therefore $( \tmmathbf{F},\tmmathbf{G},\tmmathbf{f},\tmmathbf{g} ) \in
    D_{p}$. Choose $f$ so that $| f | =\tmmathbf{F}^{1/p} .$ Then trivially
    $\langle | f |^{p} \rangle_{J} =\tmmathbf{F}.$ Choose $\varphi$ so that
    $e^{i \varphi} \tmmathbf{f} \in \mathbbm{R}$. Since $| \tmmathbf{f} |
    \leqslant \tmmathbf{F}^{1/p}$ there is $c \in \mathbbm{R}:  | c |
    \leqslant 1$ with $e^{i \varphi} \tmmathbf{f}=c\tmmathbf{F}^{1/p}$. Now
    choose $K $a subinterval of $J$ so that $c= \frac{- | K | + | J \backslash
    K |}{| J |}$ and choose $f \equiv -e^{-i \varphi} \tmmathbf{F}^{1/p}  $on
    $K$ and $f \equiv e^{-i \varphi} \tmmathbf{F}^{1/p}$ on $J \backslash K$.
    Similar considerations for variables $\tmmathbf{g},\tmmathbf{G}$ show that
    $B_{p}  >- \infty$ and therefore $B_{p} \geqslant 0.$
    
    \item The upper estimate on $B_{p}  $ follows from Burkholder's theorem
    and the fact that we only allow values $(
    \tmmathbf{F},\tmmathbf{G},\tmmathbf{f},\tmmathbf{g} )$ that can occur when
    these numbers are averages of functions $f,g$ respectively. This is why
    the domain is restricted so as not to violate H{\"o}lder's inequality: $|
    \tmmathbf{f} |^{p} \leqslant \tmmathbf{F}, | \tmmathbf{g} |^{q} \leqslant
    \tmmathbf{G}$.
    
    \item The dissipation estimate can be seen as follows. Fix the interval
    $J$ and let $2\tmmathbf{v}=\tmmathbf{v}_{+} +\tmmathbf{v}_{-}$ with
    $\tmmathbf{v},\tmmathbf{v}_{+} ,\tmmathbf{v}_{-}$ in the domain of $B$.
    Now construct functions $f,g$ with the prescribed averages on $J_{+}$ and
    $J_{-}$ respectively as shown above. The resulting functions $f,g$ defined
    on $J$ correspond to $\tmmathbf{v}.$ So
    \begin{eqnarray*}
      B ( \tmmathbf{v} ) & \geqslant & \frac{1}{4 | J |} \sum_{I \in
      \mathcal{D} ( J )} | I | | \langle f \rangle_{I_{+}} - \langle f
      \rangle_{I_{-}} |   | \langle g \rangle_{I_{+}} - \langle g
      \rangle_{I_{-}} |\\
      & \geqslant & \frac{1}{4} | \tmmathbf{f}_{+} -\tmmathbf{f}_{-} |   |
      \tmmathbf{g}_{+} -\tmmathbf{g}_{-} | \\
      & & + \frac{1}{4 | J |} \sum_{I \in
      \mathcal{D} ( J ) ,I \neq J} | I | | \langle f \rangle_{I_{+}} - \langle
      f \rangle_{I_{-}} |   | \langle g \rangle_{I_{+}} - \langle g
      \rangle_{I_{-}} | .
    \end{eqnarray*}
    Taking supremum over all $f,g$ made to match $\tmmathbf{v}_{\pm}$ gives
    the required convexity property (\ref{eq: dissipation dyadic}) of $B_{p}$.
  \end{enumeratenumeric}
  This concludes the proof of Lemma \ref{L: Bellman function dyadic}.
\end{proof}

Notice that the function $B_{p}$ attained in this manner need not be smooth.
This is the point of Theorem \ref{T: Bellman function with smoothing}. We
refer the reader also to {\cite{VolNaz2004a}}, where this has previously been
done.

\begin{proof}
  (of Theorem \ref{T: Bellman function with smoothing}) Using Lemma \ref{L:
  Bellman function dyadic} above, the rest of the proof is a standard
  mollifying argument, combined with the observation that midpoint concavity
  is the same as concavity defined through the Hessian. This is a well known
  fact that only uses an application of Taylor's theorem and integration
  against a tent function (Green's function on $\mathbbm{Z}$). Let us fix a
  compact, convex set $K \subset D_{p}$ and a corresponding $\varepsilon >0$,
  that is very small in comparison to the distance of $K$ to the boundary of
  the domain $D_{p}$. Take a standard mollifier $\varphi$ \ and mollify
  $B_{p}$ by convolution with dilates of $\varphi$. Then one can show that the
  resulting function $B_{\varepsilon ,p,K}$ has the properties
  \begin{eqnarray*}
    0 \leqslant B_{\varepsilon ,p,K} ( \tmmathbf{v} ) \leqslant ( 1+
    \varepsilon C_{K} ) ( p^{\ast} -1 ) \tmmathbf{F}^{1/p} \tmmathbf{G}^{1/q},
  \end{eqnarray*}
  and
  \begin{eqnarray}
    - \mathd^{2}_{\tmmathbf{v}} B_{\varepsilon ,p,K} ( \tmmathbf{v} )
    \geqslant 2 | \mathd \tmmathbf{f} |   | \mathd \tmmathbf{g} | . \label{eq:
    Hessian estimate of B}
  \end{eqnarray}
  
\end{proof}

Now, we are ready to derive our dynamics condition in the lemma below. In
continuous settings, this becomes just an application of the chain rule. The
absence of a chain rule in the discrete setting is a true obstacle that is
very characteristic of discrete groups. \ It can be overcome in this very
particular case, because our Hessian estimate (\ref{eq: Hessian estimate of
B}) for the function $B ( \tmmathbf{v} )$ is universal and does not depend on
$\tmmathbf{v}$.

\begin{lemma}
  \label{L: dissipation carre du champ}Let us assume a compact and convex
  subset $K \subset D_{p}$ has been chosen. Let
  \begin{eqnarray*}
    \tilde{\tmmathbf{v}} ( n,t ) \assign ( \widetilde{| f |^{p}} ( n,t ) ,
    \widetilde{| g |^{q}} ( n,t ) , \tilde{f} ( n,t ) , \tilde{g} ( n,t ) )
  \end{eqnarray*}
  and assume that $\tilde{\tmmathbf{v}} ( n,t )$ and its neighbors
  $\tilde{\tmmathbf{v}} ( n \pm e_{i} ,t )$ lie in the domain $K$ of
  $B_{\varepsilon ,p,K} .$ Then
  \begin{eqnarray}
    \begin{array}{lll}
      ( \partial_{t} - \Delta ) ( B_{\varepsilon ,p,K} \circ  
      \tilde{\tmmathbf{v}} ) ( n,t ) & \geqslant & \sum_{i} ( |
      \partial^{i}_{+} \tilde{f} |   | \partial^{i}_{+} \tilde{g} | + |
      \partial^{i}_{-} \tilde{f} |   | \partial^{i}_{-} \tilde{g} | ) .
    \end{array} \label{eq: dissipation carre du champ}
  \end{eqnarray}
\end{lemma}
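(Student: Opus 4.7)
The plan is to expand the discrete Laplacian of $B_{\varepsilon,p,K} \circ \tilde{\tmmathbf{v}}$ coordinate-wise via Taylor's theorem at each spatial neighbor, cancel the first-order piece against $\partial_t$ (where a genuine chain rule is available because $t$ is continuous), and bound what remains using the uniform Hessian estimate (\ref{eq: Hessian estimate of B}). The crux, already flagged by the authors, is that this Hessian bound is pointwise and independent of the linearization point, so the unknown intermediate values produced by Taylor's formula cause no harm.

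First I would record the linear step. Each of the four components of $\tilde{\tmmathbf{v}}(n,t)$ is a semi-continuous heat extension, so $(\partial_t - \Delta) \tilde{\tmmathbf{v}} = 0$ coordinate-wise. Since $t$ is continuous, the ordinary chain rule yields $\partial_t (B_{\varepsilon,p,K} \circ \tilde{\tmmathbf{v}})(n,t) = \mathd B_{\varepsilon,p,K}(v) \cdot (\Delta \tilde{\tmmathbf{v}})(n,t)$, where I abbreviate $v \assign \tilde{\tmmathbf{v}}(n,t)$. Next, writing $h^{i,\pm} \assign \tilde{\tmmathbf{v}}(n \pm e_i,t) - v$, convexity of $K$ guarantees that the segments $[v, v+h^{i,\pm}]$ lie in the smoothness neighborhood of $B_{\varepsilon,p,K}$, and Taylor's theorem with Lagrange remainder supplies intermediate points $\xi^{i,\pm}$ on these segments with
\begin{align*}
B(v + h^{i,+}) + B(v + h^{i,-}) - 2 B(v) &= \mathd B(v)(h^{i,+} + h^{i,-}) \\
&\qquad + \tfrac12 \mathd^2 B(\xi^{i,+})(h^{i,+}, h^{i,+}) + \tfrac12 \mathd^2 B(\xi^{i,-})(h^{i,-}, h^{i,-}).
\end{align*}

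Summing over $i$ and using $\sum_i (h^{i,+} + h^{i,-}) = (\Delta \tilde{\tmmathbf{v}})(n,t)$, the first-order term in $(\Delta B \circ \tilde{\tmmathbf{v}})(n,t)$ matches $\partial_t (B \circ \tilde{\tmmathbf{v}})(n,t)$ and cancels in the difference. What remains is
\[
(\partial_t - \Delta)(B_{\varepsilon,p,K} \circ \tilde{\tmmathbf{v}})(n,t) = - \tfrac12 \sum_i \bigl[ \mathd^2 B(\xi^{i,+})(h^{i,+}, h^{i,+}) + \mathd^2 B(\xi^{i,-})(h^{i,-}, h^{i,-}) \bigr].
\]
Applying (\ref{eq: Hessian estimate of B}) at each $\xi^{i,\pm}$ gives $- \mathd^2 B(\xi^{i,\pm})(h^{i,\pm}, h^{i,\pm}) \geqslant 2 |h^{i,\pm}_{\tmmathbf{f}}| |h^{i,\pm}_{\tmmathbf{g}}|$, and the factor $\tfrac12$ is exactly absorbed by the factor $2$. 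Identifying $h^{i,+}_{\tmmathbf{f}} = \partial^i_+ \tilde f$ and $h^{i,-}_{\tmmathbf{f}} = - \partial^i_- \tilde f$, and similarly for $\tilde g$, the absolute values absorb the signs and (\ref{eq: dissipation carre du champ}) follows.

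The main obstacle is the one the authors flag explicitly before the lemma: the absence of a chain rule for the discrete Laplacian forces the use of Taylor's theorem, which in turn introduces uncontrolled intermediate points $\xi^{i,\pm}$. The reason the argument nevertheless closes is that Theorem \ref{T: Bellman function with smoothing} delivers a Hessian bound whose right-hand side, $2|\mathd \tmmathbf{f}||\mathd \tmmathbf{g}|$, depends only on the increment and not on the base point. Once this uniformity is in hand, the identity of $\xi^{i,\pm}$ is irrelevant and the proof proceeds as cleanly as in the continuous setting.
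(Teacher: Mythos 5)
Your proof is correct and follows the same route as the paper: expand the discrete Laplacian of $B_{\varepsilon,p,K}\circ\tilde{\tmmathbf{v}}$ via Taylor's theorem at each spatial neighbor, cancel the linear part against the $t$-derivative using the heat equation and the ordinary chain rule, and apply the uniform Hessian bound (\ref{eq: Hessian estimate of B}) to the remainder on the convex set $K$. The only cosmetic difference is that the paper writes Taylor's remainder in integral form (with $\int_0^1(1-s)\,\mathd s = \tfrac{1}{2}$ absorbing the factor $2$) whereas you use the Lagrange form with a single intermediate point $\xi^{i,\pm}$; since the Hessian estimate is pointwise and base-point independent, the two are interchangeable.
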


\begin{proof}
  We will write $B=B_{\varepsilon ,p,K}$. Taylor's theorem and the smoothness
  of $B$ allow us to write,
  \begin{eqnarray*}
    B ( \tmmathbf{v}+ \delta \tmmathbf{v} ) =B ( \tmmathbf{v} ) +
    \nabla_{\tmmathbf{v}} B ( \tmmathbf{v} ) \delta \tmmathbf{v}+ \int^{1}_{0}
    ( 1-s ) ( \mathd_{\tmmathbf{v}}^{2} B ( \tmmathbf{v}+s \delta \tmmathbf{v}
    ) \delta \tmmathbf{v}, \delta \tmmathbf{v} )   \mathd s.
  \end{eqnarray*}
  In particular, for any $1 \leqslant i \leqslant N$ we have for fixed $( n,t
  )$ with $\delta^{\pm}_{i} \tilde{\tmmathbf{v}}$ such that
  $\tilde{\tmmathbf{v}} + \delta^{\pm}_{i} \tilde{\tmmathbf{v}} =
  \tilde{\tmmathbf{v}} ( \cdot \pm e_{i} , \cdot )$
  \begin{eqnarray*}
    B ( \tilde{\tmmathbf{v}} + \delta_{i}^{\pm} \tilde{\tmmathbf{v}} )& = &B (
    \tilde{\tmmathbf{v}} ) + \nabla_{\tmmathbf{v}} B ( \tilde{\tmmathbf{v}} )
    \delta_{i}^{\pm} \tilde{\tmmathbf{v}} \\
    &&+ \int^{1}_{0} ( 1-s ) (
    \mathd_{\tmmathbf{v}}^{2} B ( \tilde{\tmmathbf{v}} +s \delta_{i}^{\pm}
    \tilde{\tmmathbf{v}} ) \delta_{i}^{\pm} \tilde{\tmmathbf{v}} ,
    \delta_{i}^{\pm} \tilde{\tmmathbf{v}} )   \mathd s.
  \end{eqnarray*}
  and summing over all $i$ and all $\pm$'s, we have
  \begin{eqnarray*}
    \Delta ( B \circ \tilde{\tmmathbf{v}} ) & = & \sum_{i, \pm} B (
    \tilde{\tmmathbf{v}} + \delta_{i}^{\pm} \tilde{\tmmathbf{v}} ) -B (
    \tilde{\tmmathbf{v}} )\\
    & = & \sum_{i, \pm} \nabla_{\tmmathbf{v}} B ( \tilde{\tmmathbf{v}} )
    \delta_{i}^{\pm} \tilde{\tmmathbf{v}} + \sum_{i, \pm} \int^{1}_{0} ( 1-s )
    ( \mathd_{\tmmathbf{v}}^{2} B ( \tilde{\tmmathbf{v}} +s \delta_{i}^{\pm}
    \tilde{\tmmathbf{v}} ) \delta_{i}^{\pm} \tilde{\tmmathbf{v}} ,
    \delta_{i}^{\pm} \tilde{\tmmathbf{v}} )   \mathd s\\
    & = & \nabla_{\tmmathbf{v}} B ( \tilde{\tmmathbf{v}} ) \Delta_{}
    \tilde{\tmmathbf{v}} + \sum_{i, \pm} \int^{1}_{0} ( 1-s ) (
    \mathd_{\tmmathbf{v}}^{2} B ( \tilde{\tmmathbf{v}} +s \delta_{i}^{\pm}
    \tilde{\tmmathbf{v}} ) \delta_{i}^{\pm} \tilde{\tmmathbf{v}} ,
    \delta_{i}^{\pm} \tilde{\tmmathbf{v}} )   \mathd s
  \end{eqnarray*}
  On the other hand, the boundedness of $\tilde{\tmmathbf{v}}$ together with
  the fact that the discrete laplacian is a bounded operator from $L^{\infty}
  ( G^{N} )$ into itself easily imply that $\tilde{\tmmathbf{v}}$ is
  continuously differentiable w.r.t. the variable $t$, and we have,
  \begin{eqnarray*}
    \frac{\partial}{\partial t} ( B \circ \tilde{\tmmathbf{v}} ) =
    \nabla_{\tmmathbf{v}} B ( \tilde{\tmmathbf{v}} ) \frac{\partial
    \tilde{\tmmathbf{v}}}{\partial t} .
  \end{eqnarray*}
  Now $\tilde{\tmmathbf{v}} ( n,t )$ is a vector consisting of solutions to
  the semi-continuous heat equation. This means we have $( \partial_{t} -
  \Delta ) \tilde{\tmmathbf{v}} =0$, that is
  \begin{eqnarray*}
    \partial_{t} \tilde{\tmmathbf{v}} ( n,t ) - \sum_{i} (
    \tilde{\tmmathbf{v}} ( n+e_{i} ,t ) -2 \tilde{\tmmathbf{v}} ( n,t ) +
    \tilde{\tmmathbf{v}} ( n-e_{i} ,t ) ) =0.
  \end{eqnarray*}
  The difference of the last two equations reads
  \begin{eqnarray*}
    \left( \frac{\partial}{\partial t} - \Delta \right) ( B \circ
    \tilde{\tmmathbf{v}} ) & = & \nabla_{\tmmathbf{v}} B (
    \tilde{\tmmathbf{v}} ) ( \partial_{t} - \Delta ) \tilde{\tmmathbf{v}}\\
    &  & + \sum_{i, \pm} \int^{1}_{0} ( 1-s ) ( - \mathd_{\tmmathbf{v}}^{2} B
    ( \tilde{\tmmathbf{v}} +s \delta_{i}^{\pm} \tilde{\tmmathbf{v}} )
    \delta_{i}^{\pm} \tilde{\tmmathbf{v}} , \delta_{i}^{\pm}
    \tilde{\tmmathbf{v}} )   \mathd s.
  \end{eqnarray*}
  Notice that the domain of $B$ is convex. Recall that $( n,t )$ is such that
  both $\tilde{\tmmathbf{v}}$ and $\tilde{\tmmathbf{v}} + \delta^{i}_{\pm}
  \tilde{\tmmathbf{v}} = \tilde{\tmmathbf{v}} ( \cdot \pm e_{i} , \cdot )$ at
  $( n,t )$ are contained in the domain of $B$. Thanks to convexity, so is
  $\tilde{\tmmathbf{v}} +s \delta^{i}_{+} \tilde{\tmmathbf{v}}$ for $0
  \leqslant s \leqslant 1$. Together with the universal inequality $-
  \mathd_{\tmmathbf{v}}^{2} B ( \tmmathbf{v} ) \geqslant 2 | \mathd
  \tmmathbf{f} |   | \mathd \tmmathbf{g} |$ in the domain of $B \nocomma$, we
  finally obtain the estimate
  \begin{eqnarray*}
    ( \partial_{t} - \Delta ) ( B \circ \tilde{\tmmathbf{v}} ) & \geqslant &
    \sum_{i} ( | \partial^{i}_{+} \tilde{f} |   | \partial^{i}_{+} \tilde{g} |
    + | \partial^{i}_{-} \tilde{f} |   | \partial^{i}_{-} \tilde{g} | ).
  \end{eqnarray*}
  after observing that $\delta_{\pm} = \pm \partial_{\pm}$.
\end{proof}

\subsection{Bilinear embedding}

We are going to prove the following stated as Theorem \ref{T: bilinear
embedding p minus 1} in the Introduction:

\begin{theorem}
Let $f$ and $g$ be test functions on $G^{N}$ and $p$ and $q$
conjugate exponents. Then we have the estimate
\begin{eqnarray*}
2  \sum_{i=1}^{N} \int^{\infty}_{0} \sum_{G^{N}} | \partial^{i}_{+} \tilde{f} (
  n,t ) |   | \partial_{+}^{i} \tilde{g} ( n,t ) |  \mathd  t
  \leqslant ( p^{\ast} -1 ) \| f \|_{p} \| g \|_{q} .
\end{eqnarray*}
\end{theorem}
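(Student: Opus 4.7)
The idea is to combine the pointwise dissipation of Lemma \ref{L: dissipation carre du champ} with a summation by parts in space and a telescoping in time. Write $\tilde{\tmmathbf{v}} ( n,t ) \assign ( \widetilde{| f |^{p}} ( n,t ) , \widetilde{| g |^{q}} ( n,t ) , \tilde{f} ( n,t ) , \tilde{g} ( n,t ) )$; Jensen's inequality applied to the convex functions $| \cdot |^{p}$ and $| \cdot |^{q}$ places this trajectory in $\overline{D_{p}}$, and strictly inside $D_{p}$ for $t>0$ provided $f,g$ are non-constant.

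First, I would fix $0< \delta <T< \infty$ and a compact convex $K \subset D_{p}$ containing $\tilde{\tmmathbf{v}} ( n,t )$ and its lattice neighbors $\tilde{\tmmathbf{v}} ( n \pm e_{i} ,t )$ for all $n \in G^{N}$, all $1 \leqslant i \leqslant N$, and all $t \in [ \delta ,T ]$. This is feasible because heat extensions of the test functions $f,g$ lie in the interior of $D_{p}$ at positive times and decay rapidly in $n$. Theorem \ref{T: Bellman function with smoothing} then furnishes the smoothed function $B=B_{\varepsilon ,p,K}$, and Lemma \ref{L: dissipation carre du champ} applies pointwise on $G^{N} \times [ \delta ,T ]$.

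Integrating in $t \in [ \delta ,T ]$ and summing over $n \in G^{N}$, the time integral telescopes into $\sum_{n} B ( \tilde{\tmmathbf{v}} ( n,T ) ) - \sum_{n} B ( \tilde{\tmmathbf{v}} ( n, \delta ) )$, while $\sum_{n} \Delta ( B \circ \tilde{\tmmathbf{v}} ) ( n,t ) =0$ by the translation invariance of the counting measure on $G^{N}$; the required absolute summability comes from the spatial decay of heat extensions of compactly supported data. The shift of variable $n \mapsto n-e_{i}$ identifies $\sum_{n} | \partial^{i}_{-} \tilde{f} |   | \partial^{i}_{-} \tilde{g} |$ with $\sum_{n} | \partial^{i}_{+} \tilde{f} |   | \partial^{i}_{+} \tilde{g} |$, producing the factor of $2$. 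Combined with $B \geqslant 0$ at $t= \delta$, the size estimate of Theorem \ref{T: Bellman function with smoothing}, H\"older's inequality, and the $\ell^{1}$-preservation of the heat semigroup (since $\sum_{m} K ( m,t ) =1$), this yields
\begin{eqnarray*}
2 \sum_{i=1}^{N} \int^{T}_{\delta} \sum_{G^{N}} | \partial^{i}_{+} \tilde{f} |   | \partial^{i}_{+} \tilde{g} |   \mathd t & \leqslant & \sum_{n} B ( \tilde{\tmmathbf{v}} ( n,T ) ) \\
& \leqslant & ( 1+ \varepsilon C_{K} ) ( p^{\ast} -1 ) \| f \|_{p} \| g \|_{q} .
\end{eqnarray*}
The theorem follows by monotone convergence on the left upon letting $\delta \to 0^{+}$, $T \to \infty$, $\varepsilon \to 0^{+}$, and exhausting $D_{p}$ by such $K$.

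\textbf{Main obstacle.} The delicate point is the joint management of the parameters $K$, $\varepsilon$, $\delta$ so that $\tilde{\tmmathbf{v}}$ and its lattice neighbors remain in $K$ at every step, where the smoothed Bellman function is available. At $t=0$ the point $\tilde{\tmmathbf{v}} ( n,0 )$ sits on the boundary $\{ | \tmmathbf{f} |^{p} =\tmmathbf{F} \}$ of $D_{p}$, where $B_{p}$ vanishes but the mollification is not directly defined; this is circumvented by working on $[ \delta ,T ]$ and sending $\delta \to 0^{+}$. For $G=\mathbbm{Z}^{N}$ one also needs summability for $| n |$ large, where $\tilde{\tmmathbf{v}}$ approaches the corner $( 0,0,0,0 )$ of $\overline{D_{p}}$ --- tractable since both $B$ and the integrand are controlled by heat extensions that decay exponentially in $n$.
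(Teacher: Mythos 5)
Your argument is correct and reaches the paper's conclusion, but by a genuinely different device for integrating the dissipation in time. The paper introduces the ``backward'' heat convolution $\mathcal{D}(s) \assign b(s) \ast K(t-s)$ with $b=B\circ\tilde{\tmmathbf{v}}$, computes $\mathcal{D}'(s) = ((\partial_t - \Delta) b)(s) \ast K(t-s)$ (the Laplacian passes through the convolution), and integrates to get a \emph{pointwise} Duhamel-type bound $b(t) \geqslant \int_0^t \Gamma(\tilde f,\tilde g)(s) \ast K(t-s)\,\mathd s$, which it then sums over $Q$ using $\sum_n K(n,\cdot)=1$. You instead sum $(\partial_t - \Delta)(B\circ\tilde{\tmmathbf{v}})$ directly over $n$, kill the Laplacian by translation invariance of the counting measure, and telescope $\partial_t$ in $t$ over $[\delta,T]$. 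After summation the two identities coincide, since $\sum_n (b(s)\ast K(t-s))(n) = \sum_m b(m,s)$ for every $s$; what the paper's route buys is that the cancellation of $\Delta$ happens at the level of a single convolution, so one need not justify separately the absolute summability of $\Delta(B\circ\tilde{\tmmathbf{v}})$ nor the interchange of $\sum_n$ with $\partial_t$. Your route is more elementary (plain Green's formula) at the cost of requiring those summability/interchange justifications, which you correctly note follow from the exponential spatial decay of heat extensions of compactly supported data and the Hölder bound $\sum_n B(\tilde{\tmmathbf{v}}(n,t)) \leqslant (1+\varepsilon C_K)(p^*-1)\|f\|_p\|g\|_q$.

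Two remarks on points you flagged. First, your more careful handling of $[\delta,T]$ with $\delta\to 0^+$ is cleaner than the paper's implicit use of $b(0)$: at $t=0$ the trajectory sits on $\{|\tmmathbf{f}|^p=\tmmathbf{F}\}$, outside the open domain where the mollified Bellman function is built, and your $\delta>0$ avoids this honestly. Second, for $G=\mathbbm{Z}$ the trajectory accumulates at the corner $(0,0,0,0)\in\partial D_p$ as $|n|\to\infty$, so strictly speaking no compact $K\subset D_p$ contains it; this is a genuine technical wrinkle (also glossed over in the paper, which asserts $\tilde{\tmmathbf{v}}(Q\times[0,\infty))\subset K$) and acknowledging it is appropriate. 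A clean fix, compatible with either route, is to shift the Bellman function, $B^{(\eta)}(\tmmathbf{v}) \assign B(\tmmathbf{F}+\eta, \tmmathbf{G}+\eta, \tmmathbf{f}, \tmmathbf{g})$, so its domain contains $\overline{D_p}$ including the corner, then let $\eta\to 0^+$ at the end. Finally, the order of limits should be: fix $\delta,T,K$, send $\varepsilon\to 0^+$ (to kill $\varepsilon C_K$ with $K$ frozen), then send $\delta\to 0^+$ and $T\to\infty$ by monotone convergence.
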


\begin{proof}
  We write for simplicity $Q=G^{N}$. Notice first that for any $1 \leqslant p
  \leqslant \infty$, the discrete character of $Q^{}$ ensures that $L^{p} ( Q
  ) \subset L^{\infty} ( Q )$, namely $\| f \|_{\infty} \leqslant \| f
  \|_{p}$. From the elementary properties (\ref{eq: elementary properties heat
  kernel}) of the heat kernel, the heat semi-group is a contraction in
  $L^{\infty} ( Q )$. As a conclusion, for all $1<p< \infty ,$ for all $f \in
  L^{p} ( Q )$, we have for all $t \geqslant 0$, $\| \tilde{f} ( t )
  \|_{\infty} \leqslant \| f \|_{\infty} \leqslant \| f \|_{p}$. Similar
  observations hold for the functions $\tilde{g}$, $\widetilde{| f |^{p}}$ and
  $\widetilde{| g |^{q}}$. Setting as in Lemma \ref{L: dissipation carre du
  champ}
  \begin{eqnarray*}
    \tilde{\tmmathbf{v}} ( n,t ) \assign ( \widetilde{| f |^{p}} ( n,t ) ,
    \widetilde{| g |^{q}} ( n,t ) , \tilde{f} ( n,t ) , \tilde{g} ( n,t ) ) ,
  \end{eqnarray*}
  we have that $\tilde{\tmmathbf{v}} ( Q \times [ 0, \infty ) )$ lies in a
  compact $K \subset D_{p}$, the domain of the Bellman function. Pick a point
  $( n,t ) \in Q \times ( 0, \infty )$ and set $b ( n,t ) =B_{\varepsilon
  ,p,K} \circ \tilde{\tmmathbf{v}} ( n,t )$. Invoking Theorem \ref{T: Bellman
  function with smoothing}, we have therefore on the one hand the upper
  estimate $ \forall ( n,t ) $
  \begin{eqnarray*}
    \hspace{1em} ( B \circ \tilde{\tmmathbf{v}} ) ( n,t )
    \assign b ( n,t ) \leqslant ( 1+ \varepsilon C_{K} ) ( p^{\ast} -1 ) (
    \widetilde{| f |^{p}} ( n,t ) )^{1/p} ( \widetilde{| g |^{q}} ( n,t )
    )^{1/q} ,
  \end{eqnarray*}
  or in a condensed manner as an inequality involving functions,
  \begin{eqnarray*}
    \forall t, \hspace{1em} ( B \circ \tilde{\tmmathbf{v}} ) ( t ) \assign b (
    t ) \leqslant ( 1+ \varepsilon C_{K} ) ( p^{\ast} -1 ) ( \widetilde{| f
    |^{p}} ( t ) )^{1/p} ( \widetilde{| g |^{q}} ( t ) )^{1/q} .
  \end{eqnarray*}
  On the other hand, for a lower estimate, we wish to make explicit the
  dependence of $b ( t )$ with respect to the dissipation estimate (\ref{eq:
  dissipation carre du champ}). Introduce now the function $\mathcal{D}$
  defined on $Q \times [ 0,t ]$ as
  \begin{eqnarray*}
    \forall 0 \leqslant s \leqslant t, \hspace{1em} \mathcal{D} ( s ) \assign
    b ( s ) \ast K ( t-s )
  \end{eqnarray*}
  so that $\mathcal{D} ( t ) =b ( t ) = ( B \circ \tmmathbf{v} ) ( t )$. We
  have, using standard discrete integration by parts that are allowed thanks
  to the exponential decay of the kernel,
  \begin{eqnarray*}
    \mathcal{D}' ( s ) & = & ( \partial_{t} b ) ( s ) \ast K ( t-s ) -b ( s )
    \ast ( \partial_{t} K ) ( t-s )\\[.5em]
    & = & ( \partial_{t} b ) ( s ) \ast K ( t-s ) -b ( s ) \ast ( \Delta K )
    ( t-s )\\[.5em]
    & = & ( ( \partial_{t} - \Delta ) b ) ( s ) \ast K ( t-s ) ,
  \end{eqnarray*}
  from which follows
  \begin{eqnarray*}
    b ( t ) & = & b ( 0 ) \ast K ( t ) + \int_{0}^{t} ( ( \partial_{t} -
    \Delta ) b ) ( s ) \ast K ( t-s ) \mathd s\\
    & \geqslant & \int_{0}^{t} \Gamma ( \tilde{f} , \tilde{g} ) ( s ) \ast K
    ( t-s ) \mathd s.
  \end{eqnarray*}
  We used that $K \geqslant 0^{}$, $b \geqslant 0$, and the dissipation
  estimate (\ref{eq: dissipation carre du champ}) with $\Gamma ( \tilde{f} ,
  \tilde{g} )$ as a shorthand for:
  \begin{eqnarray*}
    \Gamma ( \tilde{f} , \tilde{g} ) := \sum_{i=1}^{N} | \partial^{i}_{+}
    \tilde{f} |   | \partial_{+}^{i} \tilde{g} | + | \partial^{i}_{-}
    \tilde{f} |   | \partial_{-}^{i} \tilde{g} |
  \end{eqnarray*}
  Summarizing, we have compared the functions $\forall t,$
  \begin{eqnarray*}
     \hspace{1em} \int_{0}^{t} \Gamma ( \tilde{f} , \tilde{g} ) ( s
    ) \ast K ( t-s ) \mathd s &\leqslant& b ( t ) \\
    &\leqslant &( 1+ \varepsilon
    C_{K} ) ( p^{\ast} -1 ) ( \widetilde{| f |^{p}} ( t ) )^{1/p} (
    \widetilde{| g |^{q}} ( t ) )^{1/q}.
  \end{eqnarray*}
  H{\"o}lder's inequality ensures that the right hand side is integrable over
  $Q \assign G^{N}$ uniformly in $t$. $ \forall t, $
  \begin{eqnarray*}
   \hspace{1em} \sum_{n \in Q} ( \widetilde{| f |^{p}} ( n,t )
    )^{1/p} ( \widetilde{| g |^{q}} ( n,t ) )^{1/q} 
    & \leqslant &
     \left(
    \sum_{n \in Q} \widetilde{| f |^{p}} ( n,t ) \right)^{1/p}  \left(
    \sum_{n \in Q} \widetilde{| g |^{q}} ( n,t ) \right)^{1/q}\\[.1em]
    & \leqslant & ( \| \widetilde{| f |^{p}} ( t ) \|_{1} )^{1/p}  ( \|
    \widetilde{| g |^{q}} ( t ) \|_{1} )^{1/q}\\[.5em]
    & \leqslant & \left( \left\| \; | f |^{p} \; \right\|_{1} \right)^{1/p}
     \left( \left\lVert \; | g |^{q} \; \right\rVert_{1} \right)^{1/q}\\[.5em]
    & \leqslant & \| f \|_{p} \| g \|_{q} < \infty ,
  \end{eqnarray*}
  where we used that the heat semi-group is a contraction in $L^{1}$ (actually
  preserves norms of nonnegative functions). For the left hand side,
  integrating also on $Q \assign G^{N}$ and using Fubini for absolutely
  converging series, we have
  \begin{eqnarray*}
  \lefteqn{\sum_{n \in Q} \int_{0}^{t} ( \Gamma ( \tilde{f} , \tilde{g} ) ( s ) \ast
    K ( t-s ) ) ( n ) \mathd s }\\
     & \qquad \qquad \qquad=&  \sum_{n \in Q} \int_{0}^{t} \sum_{m \in
    Q} \Gamma ( \tilde{f} , \tilde{g} ) ( m,s ) K ( n-m,t-s ) \mathd s\\
    &\qquad \qquad \qquad=&  \int_{0}^{t} \sum_{m \in Q} \Gamma ( \tilde{f} , \tilde{g} ) ( m,s
    ) \left( \sum_{n \in Q} K ( n-m,t-s ) \right) \mathd s\\
   & \qquad \qquad \qquad= & \int_{0}^{t} \sum_{m \in Q} \Gamma ( \tilde{f} , \tilde{g} ) ( m,s
    ) \mathd s
  \end{eqnarray*}
  Letting finally $\varepsilon$ go to $0$ and $t$ go to infinity yields the
  result.
\end{proof}

\subsection{Proof of Theorem \ref{T: p minus 1 estimate}}

The identity formulae for second order Riesz transforms allow us to deduce the
estimate from the bilinear embedding.

\begin{proof}
  (of Theorem \ref{T: p minus 1 estimate}) Remember that
  \begin{eqnarray*}
    R_{\alpha}^{2} \assign \sum^{N}_{i=1} \alpha_{i} R^{2}_{i} =
    \sum^{N}_{i=1} \alpha_{i} R^{i}_{+} R^{i}_{-} , \hspace{2em} \alpha_{i}
    \in \mathbbm{C}, \hspace{1em} | \alpha_{i} | \leqslant 1, \hspace{1em}
    \forall i \in [ 1, \ldots ,N ] .
  \end{eqnarray*}
  Using successively the representation formula (\ref{eq: representation
  formula}) of Lemma \ref{L: representation formula} and the bilinear estimate
  (\ref{eq: bilinear embedding p minus 1}) of Theorem \ref{T: bilinear
  embedding p minus 1}, we have

  \begin{eqnarray*}
    | ( f,R_{\alpha}^{2} g ) | & \assign & \left\lvert \sum_{i} \alpha_{i}
    \int^{\infty}_{0} \sum_{G^{N}} \partial^{i}_{+} \tilde{f} ( n,t )
    \overline{\partial_{+}^{i} \tilde{g} ( n,t )} + \partial^{i}_{-} \tilde{f}
    ( n,t ) \overline{\partial_{-}^{i} \tilde{g} ( n,t )} \mathd t \right\lvert\\
    & \leqslant & \sum_{i} | \alpha_{i} | \int^{\infty}_{0} \sum_{G^{N}} |
    \partial^{i}_{+} \tilde{f} ( n,t ) |   | \partial_{+}^{i} \tilde{g} ( n,t
    ) | + | \partial^{i}_{-} \tilde{f} ( n,t ) |   | \partial_{-}^{i}
    \tilde{g} ( n,t ) | \mathd t\\
    & \leqslant & ( p^{\ast} -1 ) \| f \|_{p} \| g \|_{q}
  \end{eqnarray*}
  which proves the result.
\end{proof}

\section{The Choi constant}

For some multipliers, the best constant is better than $p^{\ast} -1$. The
simplest example is when $\alpha \equiv 1$, in which case we estimate the
identity. When considered as an operator on real valued functions only, the
constant $p^{\ast} -1$ is not necessarily best possible also for some
non-trivial cases. The estimates of multipliers with real non-negative
coefficients will depend upon the relative distance of the largest and the
smallest multiplier. Instead of Burkholder's result, we use a theorem by Choi
{\cite{Cho1992a}}.

\begin{theorem}
  (Choi) Let $1<p< \infty$ and let $d_{k}$ be a real valued martingale
  difference sequence and let $\theta_{k}$ be a predictable sequence taking
  values in $\{ 0,1 \}$. The best constant in the inequality
  \begin{eqnarray*}
    \left\lVert \sum^{n}_{k=1} \theta_{k} \mathd  _{k} \right\rVert_{p} \leqslant
    \mathfrak{C}_{p} \left\| \sum^{n}_{k=1} \mathd_{k} \right\|_{p}
  \end{eqnarray*}
  that holds for all $n$ is the Choi constant $\mathfrak{C}_{p}$.
\end{theorem}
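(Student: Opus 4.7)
The plan is to follow the standard Burkholder--Bellman programme for sharp martingale inequalities. Introduce the two processes $X_n = \sum_{k=1}^n d_k$ and $Y_n = \sum_{k=1}^n \theta_k d_k$, and look for a function $U : \mathbbm{R}^2 \to \mathbbm{R}$ which simultaneously (i) \emph{majorises} the quantity of interest in the sense $U(x,y) \geq |y|^p - \mathfrak{C}_p^p |x|^p$ everywhere, and (ii) satisfies a \emph{split concavity} adapted to the $\{0,1\}$ nature of the transform: along every pair of admissible increments $(d,0)$ (corresponding to $\theta_{n+1}=0$) and $(d,d)$ (corresponding to $\theta_{n+1}=1$), $U$ lies above the appropriate chord through $(x,y)$. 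Once such a $U$ is in hand, the usual argument is to iterate $\mathbbm{E}[U(X_n,Y_n)]$ backwards using the tower property together with the predictability of $\theta_k$, to obtain $\mathbbm{E}[U(X_n,Y_n)] \leq U(0,0) = 0$; combining with the majorisation yields $\|Y\|_p^p \leq \mathfrak{C}_p^p \|X\|_p^p$, which is the desired inequality.

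Next I would produce $U$ by reverse-engineering it from the presumed extremal martingales. As in Burkholder's treatment of the $\{-1,1\}$-case, one expects the extremisers to be driven by a simple two-state Markov chain, and $U$ to satisfy a Monge--Amp\`ere-type equation on the interior of its domain together with Burkholder-style boundary conditions. The symmetry is poorer than in Burkholder's case because $\theta \in \{0,1\}$ breaks the symmetry $y \mapsto -y$, and this is precisely why $\mathfrak{C}_p$ is non-elementary. Nonetheless one can try a self-similar ansatz, for example $U(x,y) = |x|^p F(y/x)$, and reduce the problem to an ODE for $F$ whose parameters are tuned so that $F$ meets the required boundary data. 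Sharpness would then come for free by tracking the ODE solution backwards along the extremal chain, producing a sequence of pairs $(X^{(n)}, Y^{(n)})$ whose $L^p$-ratio tends to $\mathfrak{C}_p$.

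The hard part will be the construction of $U$. Because $\mathfrak{C}_p$ is not explicit --- only an asymptotic expansion in $1/p$ is quoted earlier --- one cannot hope for a closed-form Bellman function. The real work lies in (a) proving existence of a suitable solution to the reduced ODE with the correct boundary behaviour, (b) verifying that the resulting $U$ has split concavity on the entire plane, which is a direction-specific concavity along only the two lines spanned by $(1,0)$ and $(1,1)$ rather than the usual full concavity and is analytically the most subtle ingredient, and (c) closing the loop by showing that the candidate extremiser actually attains the bound asymptotically. Everything else --- the martingale iteration, the majorisation step, and the deduction of $\|Y\|_p \leq \mathfrak{C}_p \|X\|_p$ --- is then routine once $U$ is certified.
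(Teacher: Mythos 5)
The paper does not prove this statement: it is quoted verbatim from Choi \cite{Cho1992a} and used as a black box, exactly as Burkholder's theorem is quoted earlier as the input to Lemma~\ref{L: Bellman function dyadic}. So there is no in-paper proof to compare against, and your task was effectively to reproduce Choi's argument.

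Your outline correctly identifies the right programme. The pair $(X_n,Y_n)$ moves by increments $(d,0)$ when $\theta_{n+1}=0$ and $(d,d)$ when $\theta_{n+1}=1$, so the relevant ``zig-zag'' concavity is directional concavity of $U$ along the two lines $(1,0)$ and $(1,1)$ through every point, combined with the majorisation $U(x,y)\geq |y|^{p}-\mathfrak{C}_{p}^{p}|x|^{p}$ and the tower/predictability iteration giving $\mathbbm{E}\,U(X_n,Y_n)\leq U(X_0,Y_0)\leq 0$. That is indeed the skeleton of Choi's argument, just as it is the skeleton of Burkholder's.

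However, as you yourself flag, everything you list under ``the hard part'' is \emph{the entire content of the theorem}: (a) constructing a function $U$ with the prescribed boundary behaviour, (b) verifying the two directional concavities everywhere, and (c) producing near-extremal martingales showing the constant cannot be improved. None of these are carried out; what you have is a programme, not a proof. Moreover, the self-similar ansatz $U(x,y)=|x|^{p}F(y/x)$ is too rigid for this problem: because $\theta\in\{0,1\}$ destroys the $y\mapsto -y$ symmetry that underlies Burkholder's $\pm 1$ case, one needs an asymmetric, piecewise-defined function with different regimes (roughly separated by the lines $y=0$ and $y=x$), and the scaling ansatz also degenerates on $x=0$, which must lie in the domain since $X_0$ may vanish. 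Choi's constant $\mathfrak{C}_{p}$ is non-explicit precisely because it is determined by matching these pieces via a transcendental system; reducing to a single ODE in $F(y/x)$ will not produce it. Until a candidate $U$ is actually exhibited and certified, and sharpness demonstrated by an explicit martingale construction, the statement ``the best constant is $\mathfrak{C}_{p}$'' is unsupported.
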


It is a stronger hypothesis than differential subordination. We transform this
martingale estimate into the existence of an appropriate Bellman function. We
prove now

\begin{lemma}
  \label{L: Choi Bellman function dyadic}For every $1<p< \infty$ there exists
  a function $C^{+}_{p}$ of four variables $\tmmathbf{v}= (
  \tmmathbf{F},\tmmathbf{G},\tmmathbf{f},\tmmathbf{g} )$ in the domain
  $\overline{D_{p}} = \{ ( \tmmathbf{F},\tmmathbf{G},\tmmathbf{f},\tmmathbf{g}
  ) \in \mathbbm{R}^{+} \times \mathbbm{R}^{+} \times \mathbbm{R} \times
  \mathbbm{R}: | \tmmathbf{f} |^{p} \leqslant \tmmathbf{F}, | \tmmathbf{g}
  |^{q} \leqslant \tmmathbf{G} \}$ so that
  \begin{eqnarray*}
    0 \leqslant C_{p}^{+} ( \tmmathbf{v} ) \leqslant \mathfrak{C}_{p}
    \tmmathbf{F}^{1/p} \tmmathbf{G}^{1/q} ,
  \end{eqnarray*}
  and
  \begin{eqnarray*}
    C_{p}^{+} ( \tmmathbf{v} ) \geqslant \frac{1}{2} C_{p}^{+} (
    \tmmathbf{v}_{+} ) + \frac{1}{2} C_{p}^{+} ( \tmmathbf{v}_{-} )   \noplus
    \noplus + \frac{1}{4} [ ( \tmmathbf{f}_{+} -\tmmathbf{f}_{-} ) (
    \tmmathbf{g}_{+} -\tmmathbf{g}_{-} ) ]_{+}
  \end{eqnarray*}
  if $\tmmathbf{v}_{+} +\tmmathbf{v}_{-} =2\tmmathbf{v}$ and $\tmmathbf{v}$,
  $\tmmathbf{v}_{+}$, $\tmmathbf{v}_{-}$ are in the domain. Here $\left[ \,
  \cdot \, \right]_{+} \assign \max \{ \cdot ,0 \}$ denotes non-negative part.
\end{lemma}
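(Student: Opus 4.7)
The plan is to mimic the proof of Lemma~\ref{L: Bellman function dyadic}, substituting Choi's theorem for Burkholder's and restricting throughout to real-valued functions. Fix a dyadic interval $J \subset \mathbbm{R}$ and a real-valued test function $f$ supported on $J$. For any sequence $\theta = \{ \theta_I \}_{I \in \mathcal{D}(J)}$ with $\theta_I \in \{0,1\}$, set
\begin{eqnarray*}
T_\theta f = \sum_{I \in \mathcal{D}(J)} \theta_I (f, h_I) h_I .
\end{eqnarray*}
Since $\theta_I$ is $\mathfrak{F}_{n-1}$-measurable at the step where $I$ appears (with $\mathfrak{F}_n$ generated by dyadic intervals of size at least $2^{-n}|J|$), Choi's theorem yields $\| T_\theta f \|_p \leqslant \mathfrak{C}_p \| f \|_p$ uniformly in $\theta$.

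Pairing with a real-valued $g$ supported on $J$ and taking the supremum over such $\theta$, the choice $\theta_I = 1$ exactly when $(f, h_I)(g, h_I) > 0$ gives
\begin{eqnarray*}
\sup_\theta \frac{1}{|J|} (T_\theta f, g)
& = & \frac{1}{|J|} \sum_{I \in \mathcal{D}(J)} [ (f, h_I)(g, h_I) ]_+ \\
& = & \frac{1}{4|J|} \sum_{I \in \mathcal{D}(J)} |I| \, [ ( \langle f \rangle_{I_+} - \langle f \rangle_{I_-} )( \langle g \rangle_{I_+} - \langle g \rangle_{I_-} ) ]_+ ,
\end{eqnarray*}
which by Choi is bounded above by $\mathfrak{C}_p \langle | f |^p \rangle_J^{1/p} \langle | g |^q \rangle_J^{1/q}$. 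I would then define $C_p^+ ( \tmmathbf{v} )$ as the supremum of the right-hand side over all real-valued pairs $(f, g)$ supported in $J$ whose averages $\langle f \rangle_J, \langle g \rangle_J, \langle | f |^p \rangle_J, \langle | g |^q \rangle_J$ equal $\tmmathbf{f}, \tmmathbf{g}, \tmmathbf{F}, \tmmathbf{G}$ respectively. Scaling shows that $C_p^+$ is independent of $J$.

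The three properties then follow in parallel with the Burkholder case. The upper bound is exactly the Choi estimate just derived. Non-negativity comes from exhibiting at least one admissible real-valued pair realising any $\tmmathbf{v} \in \overline{D_p}$: take $f$ to be a two-valued function $\pm \tmmathbf{F}^{1/p}$ on a splitting of $J$ arranged so that $\langle f \rangle_J = \tmmathbf{f}$, which is possible since $| \tmmathbf{f} | \leqslant \tmmathbf{F}^{1/p}$; construct $g$ analogously. The dissipation inequality is obtained by restricting the defining supremum to pairs $(f, g)$ whose averages on $J_+$ and $J_-$ equal $\tmmathbf{f}_\pm, \tmmathbf{g}_\pm$, and isolating the $I = J$ summand from the rest.

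The main obstacle is preservation of the positive-part structure in this last step: whereas in the Burkholder argument the top-level contribution acquires an absolute value thanks to $\sigma_I \in S^1$, here the $I = J$ summand equals $\frac{|J|}{4} ( \tmmathbf{f}_+ - \tmmathbf{f}_- )( \tmmathbf{g}_+ - \tmmathbf{g}_- )$, a quantity determined entirely by $\tmmathbf{v}_\pm$ and independent of the restrictions of $f, g$ to $J_\pm$. Its positive part therefore factors out of the supremum, while the sums over $\mathcal{D}(J_+)$ and $\mathcal{D}(J_-)$ rescale via the scale-invariance of $C_p^+$ to give $\frac{1}{2} C_p^+ ( \tmmathbf{v}_+ ) + \frac{1}{2} C_p^+ ( \tmmathbf{v}_- )$, yielding the claimed midpoint inequality.
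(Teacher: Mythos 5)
Your proposal follows the paper's own argument essentially verbatim: define the martingale transform $T_\theta$ with $\{0,1\}$-valued (trivially predictable) coefficients, apply Choi's theorem to bound $\|T_\theta\|_{L^p}$ uniformly by $\mathfrak{C}_p$, observe that the supremum over $\theta$ of $(T_\theta f,g)$ picks out the positive-part sum $\sum_I [(f,h_I)(g,h_I)]_+$, define $C_p^+$ as the resulting Bellman supremum, and derive the three properties exactly as for $B_p$. Your extra remark --- that the top-level $I=J$ term is determined by $\tmmathbf{v}_\pm$ alone and hence its positive part factors out of the restricted supremum --- is a correct and slightly more explicit justification of the dissipation step than the paper gives.
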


\begin{proof}
  In all the sequel, we consider the case where $\theta_{k}$ takes values in
  $\{ 0,1 \}$. It will be clear from the context how to handle the general
  case. Similar to above, this implies that the operator
  \begin{eqnarray*}
    T_{c} :f \mapsto \sum_{I \in \mathcal{D} ( J )} c_{I} ( f,h_{I} ) h_{I}
  \end{eqnarray*}
  where $c= \{ c_{I} \}_{I \in \mathcal{D}} \in \{ 0,1 \}^{\mathcal{D}}$, has
  $L^{p}$ bound, uniformly in the choice of the sequence $c$, and has norm at
  most $\mathfrak{C}_{p}$. By duality this becomes
  \begin{eqnarray*}
    \sup_{c} | ( T_{c} f,g ) | \leqslant \mathfrak{C}_{p} \| f \|_{p} \| g
    \|_{q} .
  \end{eqnarray*}
  Looking at the left hand side, the supremum is attained, for a given pair of
  test functions $f,g$ when one omits either the positive or the negative
  terms of the sum. That is either the sequence
  \begin{eqnarray*}
    c= \left\{ c_{I} : \hspace{1em} c_{I} =1 \hspace{1em} \tmop{for}
    \hspace{1em} ( f,h_{I} ) ( g,h_{I} ) \geqslant 0 \nocomma , \hspace{1em}
    c_{I} =0 \hspace{1em} \tmop{otherwise} \right\}
  \end{eqnarray*}
  or the sequence
  \begin{eqnarray*}
    c= \left\{ c_{I} : \hspace{1em} c_{I} =1 \hspace{1em} \tmop{for}
    \hspace{1em} ( f,h_{I} ) ( g,h_{I} ) \leqslant 0 \nocomma , \hspace{1em}
    c_{I} =0 \hspace{1em} \tmop{otherwise} \right\}
  \end{eqnarray*}
  That is, the supremum writes as
  \begin{eqnarray*}
    | ( T_{c} f,g ) | = \sum_{I} [ ( f,h_{I} ) ( g,h_{I} ) ]_{+} \text{ or }  | ( T_{c} f,g ) | = \sum_{I} [ ( f,h_{I} ) ( g,h_{I} ) ]_{-}.
  \end{eqnarray*}
  Here, $\left[ \, \cdot \, \right]_{\pm}$ denotes the positive
  or negative part of the sequence. We deal now with the estimate of the
  positive part. Let us define the following function
  \begin{eqnarray*}
    C^{+}_{p} ( \tmmathbf{F},\tmmathbf{G},\tmmathbf{f},\tmmathbf{g} ) & = &
    \sup_{f,g} \left\{ \frac{1}{4 | J |} \sum_{I \in \mathcal{D} ( J )} | I |
    [ ( \langle f \rangle_{I_{+}} - \langle f \rangle_{I_{-}} )   ( \langle g
    \rangle_{I_{+}} - \langle g \rangle_{I_{-}} ) ]_{+} : \right.\\
     && \qquad \left. \vphantom{\frac{1}{4 | J |} \sum_{I \in \mathcal{D} ( J )} } \langle f \rangle_{J} =\tmmathbf{f}, \langle g \rangle_{J}
    =\tmmathbf{g}, \langle | f |^{p} \rangle_{J} =\tmmathbf{F}, \langle | g
    |^{q} \rangle_{J} =\tmmathbf{G} \right\} .
  \end{eqnarray*}
  The supremum runs over functions supported in $J \nocomma \nocomma \nocomma
  ,$but scaling shows that the function $C^{+}_{p}$ itself does not depend
  upon the interval $J.$ The domain of $C^{+}_{p}  $ is chosen so that every
  set of numbers $\tmmathbf{v}= (
  \tmmathbf{F},\tmmathbf{G},\tmmathbf{f},\tmmathbf{g} )$ with
  $\tmmathbf{F},\tmmathbf{G} \geqslant 0 \nocomma , | \tmmathbf{f} |^{p}
  \leqslant \tmmathbf{F}, | \tmmathbf{g} |^{q} \leqslant \tmmathbf{G}$ can be
  associated with real valued functions $f,g$ so that
  \begin{eqnarray*}
    \langle f \rangle_{J} =\tmmathbf{f}, \langle g \rangle_{J} =\tmmathbf{g},
    \langle | f |^{p} \rangle_{J} =\tmmathbf{F}, \langle | g |^{q} \rangle_{J}
    =\tmmathbf{G} \nosymbol .
  \end{eqnarray*}
  As discussed before, we know that 
  \begin{eqnarray*}
    0 \leqslant C^{+}_{p} ( \tmmathbf{v} ) \leqslant \mathfrak{C}_{p}
    \tmmathbf{F}^{1/p} \tmmathbf{G}^{1/q} .
  \end{eqnarray*}
  We will now investigate the contributions of $J_{\pm}$ to the above sum and
  derive a concavity condition for $C^{+}_{p}$. Choose as above
  $\tmmathbf{v}_{\pm}$ in the domain so that $\tmmathbf{v}_{+}
  +\tmmathbf{v}_{-} =2\tmmathbf{v}$. Then
  \begin{eqnarray*}
    \lefteqn{C^{+}_{p} ( \tmmathbf{F},\tmmathbf{G},\tmmathbf{f},\tmmathbf{g} ) }\\
    & \qquad \geqslant & \sup_{f,g} \left\{ \frac{1}{4 | J |} \sum_{I \in \mathcal{D} (
    J )} | I | [ ( \langle f \rangle_{I_{+}} - \langle f \rangle_{I_{-}} )   (
    \langle g \rangle_{I_{+}} - \langle g \rangle_{I_{-}} ) ]_{+} :\right.\\
    &&\qquad \left. \vphantom{\frac{1}{8 | J |}\sum_{I \in \mathcal{D}(J_+)}}
      \langle f \rangle_{J_{\pm}} =\tmmathbf{f}_{\pm} , \langle g
    \rangle_{J_{\pm}} =\tmmathbf{g}_{\pm} , \langle | f |^{p}
    \rangle_{J_{\pm}} =\tmmathbf{F}_{\pm} , \langle | g |^{q} \rangle_{J \pm}
    =\tmmathbf{G}_{\pm} \right\}\\
    &\qquad \geqslant & \sup_{f,g} \left\{ \frac{1}{8 | J |} \sum_{I \in
    \mathcal{D} ( J_{+} )} | I | [ ( \langle f \rangle_{I_{+}} - \langle f
    \rangle_{I_{-}} )   ( \langle g \rangle_{I_{+}} - \langle g
    \rangle_{I_{-}} ) ]_{+}\right.\\
     &&\qquad + \frac{1}{8 | J |} \sum_{I \in \mathcal{D} ( J_{-} )} | I |
    [ ( \langle f \rangle_{I_{+}} - \langle f \rangle_{I_{-}} )   ( \langle g
    \rangle_{I_{+}} - \langle g \rangle_{I_{-}} ) ]_{+}  \\
     &&\qquad + \frac{1}{4} [ ( \langle f \rangle_{I_{+}} - \langle f
    \rangle_{I_{-}} )   ( \langle g \rangle_{I_{+}} - \langle g
    \rangle_{I_{-}} ) ]_{+} \noplus : \\
    && \qquad \left.
    \vphantom{\frac{1}{8 | J |}\sum_{I \in \mathcal{D}(J_+)}} \langle f \rangle_{J_{\pm}} =\tmmathbf{f}_{\pm} , \langle g
    \rangle_{J_{\pm}} =\tmmathbf{g}_{\pm} , \langle | f |^{p}
    \rangle_{J_{\pm}} =\tmmathbf{F}_{\pm} , \langle | g |^{q} \rangle_{J \pm}
    =\tmmathbf{G}_{\pm} \right\}\\
    &\qquad \geqslant&  \frac{1}{2} C^{+}_{p} ( \tmmathbf{F}_{+} ,\tmmathbf{G}_{+}
    ,\tmmathbf{f}_{+} ,\tmmathbf{g}_{+} ) + \frac{1}{2} C^{+}_{p} (
    \tmmathbf{F}_{-} ,\tmmathbf{G}_{-} ,\tmmathbf{f}_{-} ,\tmmathbf{g}_{-} ) \\
    &&\qquad + \frac{1}{4} [ ( \tmmathbf{f}_{+} -\tmmathbf{f}_{-} ) ( \tmmathbf{g}_{+}
    -\tmmathbf{g}_{-} ) ]_{+} .
  \end{eqnarray*}
  This proves Lemma \ref{L: Choi Bellman function dyadic}.
\end{proof}

An infinitesimal version of Lemma \ref{L: Choi Bellman function dyadic},
omitting smoothing parameters in the name of the function and{\color{red} 
}following the same considerations as in the proof of the range and Hessian
estimate (\ref{eq: Hessian estimate of B}) of Theorem \ref{T: Bellman function
with smoothing}, we have
\begin{eqnarray*}
  0 \leqslant C_{p}^{+} ( \tmmathbf{v} ) \leqslant ( 1+ \varepsilon C_{K} )
  \mathfrak{C}_{p} \tmmathbf{F}^{1/p} \tmmathbf{G}^{1/q} ,
\end{eqnarray*}
and
\begin{eqnarray*}
  - \mathd_{\tmmathbf{v}}^{2} C_{p}^{+} ( \tmmathbf{v} ) \geqslant ( 1+
  \tmop{sign} ( \mathd \tmmathbf{f} \mathd \tmmathbf{g} ) ) \mathd
  \tmmathbf{f} \mathd \tmmathbf{g}.
\end{eqnarray*}
Again, by Taylor's theorem
\begin{eqnarray*}
   C_{p}^{+} ( \tmmathbf{v}+ \delta \tmmathbf{v} ) &=&C_{p}^{+} ( \tmmathbf{v} )
  + \nabla_{\tmmathbf{v}} C_{p}^{+} ( \tmmathbf{v} ) \delta \tmmathbf{v}  \medskip\\
 & & \qquad+ \int^{1}_{0} ( 1-s ) ( \mathd_{\tmmathbf{v}}^{2} C_{p}^{+} ( \tmmathbf{v}+s
  \delta \tmmathbf{v} ) \delta \tmmathbf{v}, \delta \tmmathbf{v} ) \mathd s.
\end{eqnarray*}
Define as before the vector $\tilde{\tmmathbf{v}} ( n,t ) \assign (
\widetilde{| f |^{p}} ( n,t ) , \widetilde{| g |^{q}} ( n,t ) , \tilde{f} (
n,t ) , \tilde{g} ( n,t ) )$. Letting for fixed $( n,t )$, $\delta^{i}_{\pm}
\tilde{\tmmathbf{v}} = \pm \partial^{i}_{\pm} \tilde{\tmmathbf{v}} =
\tilde{\tmmathbf{v}} ( \cdot \pm e_{i} , \cdot ) - \tilde{\tmmathbf{v}}$, we
have
\begin{eqnarray*}
  C_{p}^{+} ( \tilde{\tmmathbf{v}} + \delta^{i}_{\pm} \tilde{\tmmathbf{v}} )
  &=&C_{p}^{+} ( \tilde{\tmmathbf{v}} ) + \nabla_{\tmmathbf{v}} C_{p}^{+} (
  \tilde{\tmmathbf{v}} ) \cdot \delta^{i}_{\pm} \tilde{\tmmathbf{v}} \\
  && \qquad +
  \int^{1}_{0} ( 1-s ) ( \mathd_{\tmmathbf{v}}^{2} C_{p}^{+} (
  \tilde{\tmmathbf{v}} +s \delta^{i}_{\pm} \tilde{\tmmathbf{v}} )
  \delta^{i}_{\pm} \tilde{\tmmathbf{v}} , \delta^{i}_{\pm}
  \tilde{\tmmathbf{v}} ) \mathd s,
\end{eqnarray*}
as well as
\begin{eqnarray*}
  \partial_{t} C_{p}^{+} ( \tilde{\tmmathbf{v}} ( n,t ) ) =
  \nabla_{\tmmathbf{v}} C_{p}^{+} ( \tilde{\tmmathbf{v}} ( n,t ) )
  \partial_{t} \tilde{\tmmathbf{v}} ( n,t ) .
\end{eqnarray*}
Summing over $i$ and $\pm$ and using that $\tilde{\tmmathbf{v}}$ solves the
heat equation, we obtain
\begin{eqnarray*}
  ( \partial_{t} - \Delta ) C_{p}^{+} ( \tilde{\tmmathbf{v}} ( n,t ) )=
  \sum_{i,\pm} \int^{1}_{0} ( 1-s ) \{ ( \mathd_{\tmmathbf{v}}^{2} C_{p}^{+} (
  \tilde{\tmmathbf{v}} +s \delta^{i}_{\pm} \tilde{\tmmathbf{v}} ) \delta^{i}_{\pm}
  \tilde{\tmmathbf{v}} , \delta^{i}_{\pm} \tilde{\tmmathbf{v}} ) \} \mathd s.
\end{eqnarray*}
Notice that the domain of $C_{p}^{+}$ is convex. Both $\tilde{\tmmathbf{v}} (
n,t )$ and $\tilde{\tmmathbf{v}} + \delta^{i}_{\pm} \tilde{\tmmathbf{v}} =
\tilde{\tmmathbf{v}} ( \cdot \pm e_{i} , \cdot )$ are contained in the domain
of $C_{p}^{+}$, and so is $\tilde{\tmmathbf{v}} +s \delta^{i}_{+}
\tilde{\tmmathbf{v}}$ for $0 \leqslant s \leqslant 1$. Together with the
universal inequality $- \mathd_{\tmmathbf{v}}^{2} C_{p}^{+} ( \tmmathbf{v} )
\geqslant ( 1+ \tmop{sign} (   \mathd \tmmathbf{f} \mathd \tmmathbf{g} ) )
\mathd \tmmathbf{f} \mathd \tmmathbf{g}$ in the domain of $C_{p}^{+}
\nocomma$, we obtain the estimate
\begin{eqnarray}
  ( \partial_{t} - \Delta ) C_{p}^{+} ( \tilde{\tmmathbf{v}} ( n,t ) )
  \geqslant \sum_{i} \left[ \partial^{i}_{+} \tilde{f} \hspace{0.5em}
  \partial^{i}_{+} \tilde{g} \right]_{+} + \left[ \partial^{i}_{-} \tilde{f}
  \hspace{0.5em} \partial^{i}_{-} \tilde{g} \right]_{+} \label{eq: dissipation
  estimate Choi}
\end{eqnarray}
With the help of these considerations, it is now easy to prove:

\begin{proof}
  (of Theorem \ref{T: Choi constant estimate} and Theorem \ref{T: bilinear
  embedding Choi})
  
  By the identity formula, we see that for $c_{i} \in \{ 0,1 \}$,
  \begin{eqnarray*}
    \left\lvert \left( f, \sum_{i} c_{i} R_{i}^{2} g \right) \right\vert & = & 2 \left\lvert
    \int^{\infty}_{0} \sum_{\mathbbm{Z}^{N}} \sum_{i} c_{i} \partial^{i}_{+}
    \tilde{f} ( n,t ) \partial_{+}^{i} \tilde{g} ( n,t ) \mathd t \right\lvert\\
    & \leqslant & \max_\pm  \left\{ 2 \int^{\infty}_{0} \sum_{\mathbbm{Z}^{N}}
    \sum_{i} [ \delta^{i}_{+} \tilde{f} \delta^{i}_{+} \tilde{g} ]_{\pm} \mathd t  \right\}\\
    & \leqslant & \mathfrak{C}_{p} \| f \|_{p} \| g \|_{q} .
  \end{eqnarray*}
  We used successively the representation formula (\ref{eq: representation
  formula}) of Lemma \ref{L: representation formula} {\tmstrong{}}the bilinear
  embedding of Theorem \ref{T: bilinear embedding Choi} resulting from the
  dissipation estimate (\ref{eq: dissipation estimate Choi}) above for
  $C_{p}^{+}$ above as well as the similar bilinear embedding that results
  from $C_{p}^{-}$. This concludes the proof of Theorem \ref{T: Choi constant
  estimate}. 
\end{proof}

\section{Sharpness.}

The sharpness in $\mathbbm{Z}^{N}  $is inherited from the continuous case
$\mathbbm{R}^{N}$. Just consider the isomorphic groups $( t\mathbbm{Z} )^{N}$
for $0<t \leqslant 1$ in conjunction with the Lax equivalence theorem
{\cite{LaxRic1956a}}. By the same argument, sharpness for a {\tmem{uniform}}
estimate in $m$ for the cyclic case $( \mathbbm{Z}/m\mathbbm{Z} )^{N}$ is
inherited from that on the torus $\mathbbm{T}^{N}$.

\end{document}